\newtheorem{theorem}{Theorem}[section]
\newtheorem{claim}[theorem]{Claim}
\newtheorem{cor}[theorem]{Corollary}
\newtheorem{fact}[theorem]{Fact}
\newtheorem{prop}[theorem]{Proposition}
\newtheorem{conj}[theorem]{Conjecture}
\newtheorem{ques}[theorem]{Question}
\newtheorem{lemma}[theorem]{Lemma}
\theoremstyle{definition}
\theoremstyle{definition}
\theoremstyle{definition}
\theoremstyle{definition}
\theoremstyle{definition}
\theoremstyle{definition}
\theoremstyle{definition}
\theoremstyle{definition}
\newcommand\ex{\ensuremath{\mathrm{ex}}}
\newcommand{\ep}{\varepsilon}
\newcommand{\Om}{\Omega}
\newcommand{\al}{\alpha}
\newcommand{\de}{\delta}
\newcommand{\ga}{\gamma}
\newcommand{\Ga}{\Gamma}
\newcommand{\De}{\Delta}
\newcommand{\cH}{\mathcal{H}}
\newcommand{\cG}{\mathcal{G}}
\newcommand{\cB}{\mathcal{B}}
\newcommand{\cC}{\mathcal{C}}
\newcommand{\Ho}{\H{o}}
\newcommand{\bZ}{\ensuremath{\mathbb{Z}}}
\newcommand{\bP}{\ensuremath{\mathbb{P}}}
\newcommand{\pri}{\mathbf{P}_{\le n}}
\newcommand{\bE}{\ensuremath{\mathbb{E}}}
\title{The number of subsets of integers with no $k$-term arithmetic progression}
\author{
J\'ozsef Balogh
 \thanks{Department of Mathematical Sciences,
 University of Illinois at Urbana-Champaign, Urbana, Illinois 61801, USA. Email: {\tt
jobal@math.uiuc.edu}. Research is partially supported by NSA Grant H98230-15-1-0002, NSF DMS-1500121 and Arnold O. Beckman Research Award (UIUC Campus Research Board 15006).}
 \quad\quad
 Hong Liu
 \thanks{Mathematics Institute and DIMAP, University of Warwick, Coventry, CV4 7AL, UK.  Email: {\tt h.liu.9@warwick.ac.uk}. This research was done while HL was at University of Illinois at Urbana-Champaign.}
 \quad\quad
 Maryam Sharifzadeh
 \thanks{Department of Mathematical Sciences,
 University of Illinois at Urbana-Champaign, Urbana, Illinois 61801, USA. Email: {\tt
sharifz2@illinois.edu}.}
}
\begin{document}
\maketitle
\begin{abstract}
Addressing a question of Cameron and Erd\Ho s, we show that, for infinitely many values of $n$, the number of subsets of $\{1,2,\ldots, n\}$ that do not contain a $k$-term arithmetic progression is at most $2^{O(r_k(n))}$, where $r_k(n)$ is the maximum cardinality of a subset of $\{1,2,\ldots, n\}$ without a $k$-term arithmetic progression. This bound is optimal up to a constant factor in the exponent. For all values of $n$, we prove a weaker bound, which is nevertheless sufficient to transfer the current best upper bound on $r_k(n)$ to the sparse random setting. To achieve these bounds, we establish a new supersaturation result, which roughly states that sets of size $\Theta(r_k(n))$ contain superlinearly many $k$-term arithmetic progressions.

For integers $r$ and $k$, Erd\Ho s asked whether there is a set of integers $S$ with no $(k+1)$-term arithmetic progression, but such that any $r$-coloring of $S$ yields a monochromatic $k$-term arithmetic progression. Ne\v{s}et\v{r}il and R\"odl, and independently Spencer, answered this question affirmatively. We show the following density version: for every $k\ge 3$ and $\de>0$, there exists a reasonably dense subset of primes $S$ with no $(k+1)$-term arithmetic progression, yet every $U\subseteq S$ of size $|U|\ge\de|S|$ contains a $k$-term arithmetic progression.

Our proof uses the hypergraph container method, which has proven to be a very powerful tool in extremal combinatorics. The idea behind the container method is to have a small certificate set to describe a large independent set. We give two further applications in the appendix using this idea.
\end{abstract}


\section{Introduction}\label{sec-intro}
Enumerating discrete objects in a given family with certain properties is one of the most fundamental problems in extremal combinatorics. In the context of graphs, this was initiated by Erd\Ho s, Kleitman and Rothschild~\cite{ekr76}  who studied the family of triangle-free graphs. For recent developments, see e.g.~\cite{MTF,MS} and the references therein. In this paper, we investigate counting problems in the arithmetic setting. In this direction, one of the major open problems, raised by Cameron and Erd\Ho s~\cite{CE}, was to prove that the number of sum-free sets\footnote{A set $S$ is \emph{sum-free}, if for any $x,y\in S$, $x+y\notin S$.} in $\{1,2,\ldots,n\}$ is $O(2^{n/2})$. This conjecture was proven independently by Green~\cite{G-CE} and Sapozhenko~\cite{sap}. See also~\cite{BLST,BLST2} for the proof of   another conjecture of Cameron and Erd\Ho s~\cite{CE-max} concerning the family of maximal sum-free sets.

Our main results are given in Section~\ref{sec-1-1}: the first is on counting subsets of integers without an arithmetic progression of fixed length (see Theorem~\ref{thm-main}); the second is a new supersaturation result for arithmetic progressions (see Theorem~\ref{thm-supsat}). In Section~\ref{sec-1-2}, we show the existence of a set of primes without a $(k+1)$-term arithmetic progression which however is very rich in $k$-term arithmetic progressions (see Theorem~\ref{thm-prime-density}\footnote{A similar result is discussed in the Appendix as well as an additional problem about sumsets.}).

\subsection{Enumerating sets with no $k$-term arithmetic progression}\label{sec-1-1}
A subset of $[n]:=\{1, 2,\ldots,n\}$ is $k$\emph{-AP-free} if it does not contain a $k$-term arithmetic progression. Denote by $r_k(n)$ the maximum size of a $k$-AP-free subset of $[n]$. Cameron and Erd\Ho s~\cite{CE} raised the following question: How many subsets of $[n]$ do not contain a $k$-term arithmetic progression?  In particular, they asked the following question.
\begin{ques}[Cameron-Erd\Ho s]\label{conj-CE}
Is it true that the number of $k$-AP-free subsets of $[n]$ is 
$$2^{(1+o(1))r_k(n)}?$$
\end{ques}

Since every subset of a $k$-AP-free set is also $k$-AP-free, one can easily obtain $2^{r_k(n)}$ many $k$-AP-free subsets of $[n]$. In fact, Cameron and Erd\Ho s~\cite{CE} slightly improved this obvious lower bound: writing $R_k(n)$ for the number of $k$-AP-free subsets of $[n]$, they proved that
\begin{eqnarray}\label{eq-false}
\limsup_{n\rightarrow \infty} \frac{R_k(n)}{2^{r_k(n)}}=\infty.
\end{eqnarray}

Until recently, the only progress on the upper bound in the last 30 years was improving the bounds on $r_k(n)$. Then Balogh, Morris and Samotij~\cite{BMS}, and independently Saxton and Thomason~\cite{ST}, proved the following: for any $\beta>0$ and integer $k\ge 3$, there exists $C>0$ such that for $m\ge Cn^{1-1/(k-1)}$, the number of $k$-AP-free $m$-sets in $[n]$ is at most ${\beta n\choose m}$. This deep counting result implies the sparse random analogue of Szemer\'edi's theorem~\cite{Sz2} which was proved earlier by Conlon and Gowers~\cite{CG} and independently by Schacht~\cite{Sch}. However, this bound is far from settling Question~\ref{conj-CE}.

One of the reasons for the difficulty in finding good upper bounds on $R_k(n)$ is our limited understanding of $r_k(n)$. Indeed, despite much effort, the gap between the current known lower and upper bounds on $r_3(n)$ is still rather large; closing this gap remains one of the most difficult problems in additive number theory. For the lower bound on $r_3(n)$, the celebrated construction of Behrend~\cite{Beh} shows that
\begin{eqnarray*}
r_3(n)=\Omega\left( \frac{n}{2^{2\sqrt{2}\sqrt{\log_2n}}\cdot \log^{1/4}n}\right).
\end{eqnarray*}
This was recently improved by Elkin~\cite{Elk} by a factor of $\sqrt{\log n}$, see also Green and Wolf~\cite{GW}. Roth~\cite{Ro} gave the first non-trivial upper bound on $r_3(n)$, followed by the improvements of Heath-Brown~\cite{HB}, Szemer\'edi~\cite{Sz}, Bourgain~\cite{Bou} and a recent breakthrough of Sanders~\cite{S}. The current best bound is due to Bloom~\cite{Bloom}: 
\begin{eqnarray}\label{eq-sanders}
r_3(n)=O\left(\frac{n(\log\log n)^4}{\log n}\right).
\end{eqnarray}
For $k\ge 4$  there exist $c_k,c_k'>0$ such that
\begin{eqnarray}\label{eq-behrend}
\frac{n}{2^{c_k(\log n)^{1/(k-1)}}}\le r_k(n)\le \frac{n}{(\log\log n)^{c_k'}},
\end{eqnarray}
where the lower bound is due to Rankin~\cite{Ran} and the upper bound is by Gowers~\cite{Gow1,Gow2}.

Notice that, using the lower bound in~\eqref{eq-behrend}, we obtain the following trivial upper bound for $R_k(n)$: 
$$R_k(n)\le \sum_{i=0}^{r_k(n)}{n\choose i}<2{n\choose r_k(n)}<2\left(\frac{en}{r_k(n)}\right)^{r_k(n)}=2^{O\left(r_k(n)\cdot(\log n)^{\frac{1}{k-1}}\right)}.$$
We show that, for infinitely many $n$, the $(\log n)^{\frac{1}{k-1}}$ term in the exponent is not needed, i.e.~our result is optimal up to a constant factor in the exponent.
\begin{theorem}\label{thm-main}
The number of $k$-AP-free subsets of $[n]$ is  $2^{O(r_k(n))}$ for infinitely many values of $n$.
\end{theorem}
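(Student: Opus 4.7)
The approach is to apply the hypergraph container method of Balogh--Morris--Samotij~\cite{BMS} and Saxton--Thomason~\cite{ST} to the $k$-uniform hypergraph $H = H_n$ on vertex set $[n]$ whose hyperedges are the $k$-term APs in $[n]$, noting that a $k$-AP-free subset of $[n]$ is precisely an independent set of $H$. The aim is to build a family of containers $\cC$ with the three properties: (i) every independent set of $H$ lies in some $C \in \cC$; (ii) every $C \in \cC$ satisfies $|C| \le C_0 \cdot r_k(n)$ for a suitable constant $C_0$; and (iii) $|\cC| \le 2^{O(r_k(n))}$. Granted these, the theorem is immediate from
\[
R_k(n) \;\le\; \sum_{C \in \cC} 2^{|C|} \;\le\; |\cC| \cdot 2^{C_0 \cdot r_k(n)} \;=\; 2^{O(r_k(n))}.
\]

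The engine driving property~(ii) is Theorem~\ref{thm-supsat}, which says that a subset of $[n]$ of size $\Theta(r_k(n))$ contains \emph{superlinearly} many $k$-APs. I would run the container lemma iteratively: starting from $\cC = \{[n]\}$, in each round, for every container $C$ not yet of size $\le C_0 \cdot r_k(n)$, I invoke Theorem~\ref{thm-supsat} inside $C$ to supply the average-degree and codegree conditions demanded by the container lemma, which then splits $C$ into strictly smaller subcontainers. Superlinearity is essential here: a merely linear lower bound on the number of $k$-APs would only recover the trivial estimate $\binom{n}{r_k(n)} = 2^{O(r_k(n) \log(n/r_k(n)))}$, while the superlinear count allows the ``fingerprint'' at each split to be a $o(1)$ fraction of $|C|$, so that the successive rounds multiply $|\cC|$ only by a subexponential factor and the final total remains $2^{O(r_k(n))}$ even after all containers have shrunk down to size $O(r_k(n))$.

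The main obstacle is the quantitative bookkeeping of this iteration: if round $i$ operates on containers of size $|V_i|$ with fingerprint density $\tau_i$, one needs both $|V_i|$ to decay down to $O(r_k(n))$ in boundedly many rounds and $\sum_i \tau_i |V_i| = O(r_k(n))$, and the two constraints are in tension. This is precisely where the quantitative strength of Theorem~\ref{thm-supsat} is consumed, and it is almost certainly the source of the ``infinitely many $n$'' caveat: because $r_k(n)$ is not known to be monotone or even regular in $n$, the calibration of supersaturation to container parameters only closes up on an infinite subsequence of $n$ along which $r_k$ is sufficiently well-behaved (e.g.\ does not drop sharply between the scales used across successive rounds). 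For general $n$ the same argument still runs, but at the cost of an extra logarithmic factor in the exponent, which is the ``weaker bound for all $n$'' alluded to in the abstract and used for the sparse-random transference.
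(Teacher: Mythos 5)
Your proposal is correct and follows essentially the paper's proof: one application of the hypergraph container theorem to the $k$-AP hypergraph with parameters roughly $\ep\approx\log^{3k-2}n\,(n/r_k(n))^{k-1}/n$ and $\tau\approx (r_k(n)/n)/\log^{3}n$ gives $\log|\cC|=o(r_k(n))$ and containers with at most $\ep n^2$ $k$-APs, and Theorem~\ref{thm-supsat} then forces every container to have size $O(r_k(n))$, so no explicit round-by-round iteration or multi-scale bookkeeping is needed on your part. The only inaccuracy is in your aside: the restriction to infinitely many $n$ is inherited from Theorem~\ref{thm-supsat} itself (via Lemma~\ref{lem-smooth2}, which controls $r_k$ between the scales $n$ and $M(n)$ inside the supersaturation argument), not from calibrating successive container rounds, and the all-$n$ statement (Theorem~\ref{prop-cond}) gives $2^{O(n/h(n))}$ in terms of an upper bound on $r_k(n)$, which may be substantially weaker than $2^{O(r_k(n))}$ rather than off by just a logarithmic factor in the exponent.
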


An immediate corollary of Theorem~\ref{thm-main} is the following.
\begin{cor}\label{cor-dense}
For every $\ep>0$, there exists a constant $b>0$ such that the following holds. Let $A(b)\subseteq \mathbb{Z}$ consist of all integers $n$ such that the number of $k$-AP-free subsets of $[n]$ is at most $2^{b\cdot r_k(n)}$. Then 
$$\limsup_{n\rightarrow \infty} \frac{|A(b)\cap [n]|}{n} \ge 1-\ep.$$
\end{cor}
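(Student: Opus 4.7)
The plan is to bootstrap Theorem~\ref{thm-main} from the sparse set of ``good'' integers it provides to a set of density close to $1$, by exploiting two elementary monotonicity properties of the functions $R_k$ and $r_k$. First, let $b_0 > 0$ and $M \subseteq \mathbb{N}$ be an infinite set furnished by Theorem~\ref{thm-main}, so that $R_k(m) \le 2^{b_0 r_k(m)}$ for every $m \in M$. Given $\ep > 0$, I will choose $b = b_0 \cdot (\lceil 1/\ep \rceil + 1)$ and show that $[\lceil \ep m \rceil, m] \subseteq A(b)$ for each $m \in M$, which immediately yields $|A(b) \cap [m]|/m \ge 1 - \ep - o(1)$ along the infinite sequence $m \in M$, and hence the desired $\limsup$.

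The two ingredients I will use are the following. First, $R_k$ is nondecreasing: a $k$-AP-free subset of $[n]$ is automatically a $k$-AP-free subset of $[n']$ for any $n' \ge n$, so $R_k(n) \le R_k(n')$. Second, $r_k$ is subadditive, and in particular $r_k(m) \le \lceil m/n \rceil \cdot r_k(n)$ for all $m \ge n$. This follows by partitioning $[m]$ into $\lceil m/n \rceil$ blocks, each of length at most $n$, and observing that after translation any $k$-AP-free subset of $[m]$ restricts to a $k$-AP-free subset of each block, of size at most $r_k(n)$.

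Combining these two facts, for any $m \in M$ and any $n \in [\lceil \ep m \rceil, m]$, I obtain
\[
R_k(n) \;\le\; R_k(m) \;\le\; 2^{b_0 \cdot r_k(m)} \;\le\; 2^{b_0 \lceil m/n \rceil \cdot r_k(n)} \;\le\; 2^{b_0 (\lceil 1/\ep \rceil + 1) \cdot r_k(n)} \;=\; 2^{b \cdot r_k(n)},
\]
so that $n \in A(b)$. Hence every integer in the interval $[\lceil \ep m \rceil, m]$ belongs to $A(b)$, giving $|A(b) \cap [m]| \ge (1 - \ep) m - O(1)$. Since $M$ is infinite, taking $\limsup$ over $m \in M$ yields the claim. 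There is no serious obstacle here: the argument is a short direct corollary, whose only moving part is choosing the multiplicative constant $b$ large enough to absorb the factor $\lceil 1/\ep\rceil + 1$ coming from the subadditivity bound.
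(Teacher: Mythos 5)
Your argument is correct and is essentially the paper's own proof: both take the sequence from Theorem~\ref{thm-main}, use that every $k$-AP-free subset of $[n]$ with $n\le m$ is a $k$-AP-free subset of $[m]$, and apply the subadditivity bound~\eqref{eq-f-dec} to show every integer in $[\ep m, m]$ lies in $A(b)$ for $b$ a constant multiple of $1/\ep$ times the constant from Theorem~\ref{thm-main}. The only differences are cosmetic (your $b = b_0(\lceil 1/\ep\rceil+1)$ versus the paper's $b = 2c/\ep$).
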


Enumerating discrete structures with certain local constraints is a central topic in combinatorics. Theorem~\ref{thm-main} is the first such result in which the order of magnitude of the corresponding extremal function is not known. 

It is also worth mentioning that two other natural conjectures of Erd\Ho s are false: it was conjectured that the number of Sidon sets\footnote{A set $A\subseteq [n]$ is a \emph{Sidon} set if there do not exist distinct $a,b,c,d\in A$ such that $a+b=c+d$.} in $[n]$, denoted by $S(n)$, is $2^{(1+o(1))s(n)}$, where $s(n)$ denotes the size of a maximum Sidon set. However, it is known that $2^{1.16s(n)}\le S(n)=2^{O(s(n))}$, where the lower bound is by Saxton and Thomason~\cite{ST} and the upper bound is by Kohayakawa, Lee, R{\"o}dl and Samotij~\cite{Sidon} (see also~\cite{ST}). Another conjecture of Erd\Ho s states that the number of $C_6$-free\footnote{Denote by $C_k$ the cycle of length $k$. Given a graph $H$, a graph $G$ is $H$-\emph{free} if $G$ does not contain $H$ as a subgraph. Denote by $\ex(n,G)$ the maximum number of edges a $G$-free graph can have.} graphs on vertex set $[n]$, denoted by $H(n)$, is $2^{(1+o(1))\ex(n,C_6)}$. However, $2^{1.0007\ex(n,C_6)}\le H(n)=2^{O(\ex(n,C_6))}$, where the lower bound is by Morris and Saxton~\cite{MS} and the upper bound is by Kleitman and Wilson~\cite{KW}. In view of these examples and~\eqref{eq-false}, it is not inconceivable that the answer to Question~\ref{conj-CE} is no.

For all values of $n$, we obtain the following weaker counting estimate, which is nevertheless sufficient to improve  previous transference theorems for Szemer\'edi's theorem, in particular implies Corollary~\ref{cor-cond}.
\begin{theorem}\label{prop-cond}
If $r_k(n)\le \frac{n}{h(n)}$, where $h(n)\le (\log n)^c$ for some $c>0$, then the number of $k$-AP-free subsets of $[n]$ is at most $2^{O(n/h(n))}$. Furthermore, for any $\ga>0$, there exists $C=C(k,c,\ga)>0$ such that for any $m\ge n^{1-\frac{1}{k-1}+\ga}$, the number of $k$-AP-free $m$-subsets of $[n]$ is at most
$${Cn/h(n)\choose m}.$$
\end{theorem}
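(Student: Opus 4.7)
The plan is to iterate the hypergraph container method on the $k$-uniform hypergraph $H$ on vertex set $[n]$ whose edges are the $k$-APs, using the supersaturation bound of Theorem~\ref{thm-supsat} to push the iteration well below the size at which classical container arguments stall. Since a $k$-AP-free subset of $[n]$ is exactly an independent set in $H$, it suffices to produce a small family of containers, each of size at most $C_0 n/h(n)$, that covers all such independent sets.

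First I would establish a one-step container lemma: there exist constants $\de, C_0>0$ such that for every $C\subseteq [n]$ with $m := |C| \ge C_0 n/h(n)$, every independent $S\subseteq C$ admits a fingerprint $T\subseteq S$ and a deterministic container $f(T)\supseteq S$ with $|f(T)|\le (1-\de) m$, where $|T|\le s$ for $s = O(n^{1-1/(k-1)} (\log n)^{O(1)})$. This is the Saxton--Thomason / Balogh--Morris--Samotij container theorem applied to the induced hypergraph $H[C]$: Theorem~\ref{thm-supsat} certifies a superlinear number of $k$-APs in $C$ which, combined with the fact that the codegrees $\De_j(H)$ for $j\ge 2$ are bounded by an absolute constant, lets the container parameter $\tau$ be chosen so that the fingerprint size $\tau m$ is at most $s$, \emph{uniformly} in $m \ge C_0 n/h(n)$.

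I would then iterate starting from the trivial container $[n]$, halting each branch once the current container has size at most $C_0 n/h(n)$. Since each step shrinks the container by a constant factor, the iteration has depth $T_0 = O(\log h(n))$, and each final container is encoded by a sequence of at most $T_0$ fingerprints, of aggregate size $T_0 \cdot s = O(n^{1-1/(k-1)} (\log n)^{O(1)})$. The hypothesis $h(n)\le (\log n)^c$ together with $k\ge 3$ gives $n/h(n) \gg n^{1-1/(k-1)} (\log n)^{O(1)}$, so there are at most $2^{o(n/h(n))}$ containers. Summing $2^{|C|} \le 2^{C_0 n/h(n)}$ over containers yields $2^{O(n/h(n))}$, the first assertion. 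For the refined $m$-set estimate, summing $\binom{|C|}{m}\le \binom{C_0 n/h(n)}{m}$ over containers and absorbing the $2^{o(n/h(n))}$ container-count factor into a larger constant $C$ works because $\binom{Cn/h(n)}{m}/\binom{C_0 n/h(n)}{m}\ge (C/(2C_0))^m$ when $m\ll n/h(n)$, and the assumption $m\ge n^{1-1/(k-1)+\ga}$ makes $m\log(C/C_0)$ dominate the container-count exponent.

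The main obstacle is the uniform-in-$m$ one-step lemma of the second paragraph: without supersaturation at the scale $\Th(r_k(n))$ one stalls near $m \approx n/(\log n)^{O(1)}$, the BMS/ST threshold, and cannot descend to $O(n/h(n))$. Theorem~\ref{thm-supsat} is precisely what breaks this barrier. The remainder of the argument is bookkeeping that is made routine by the assumption $h(n)\le (\log n)^c$.
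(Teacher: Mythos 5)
There is a genuine gap, and it sits at the heart of your plan: you drive the whole argument with Theorem~\ref{thm-supsat}, but that theorem is only proved for an unspecified infinite sequence $\{n_i\}$ (its proof needs the smoothness statement of Lemma~\ref{lem-smooth2}, which is exactly what is not available for every $n$). Theorem~\ref{prop-cond}, by contrast, must hold for \emph{all} $n$ --- that is its entire point relative to Theorem~\ref{thm-main}, and it is what Corollary~\ref{cor-cond} (a statement about $[n]_{p_n}$ for all large $n$) requires. A proof routed through Theorem~\ref{thm-supsat} therefore only yields the conclusion along the sequence $\{n_i\}$. The paper avoids this by not using Theorem~\ref{thm-supsat} at all here: it applies Corollary~\ref{cor-container} once with $\ep=n^{-\ga/2}$, $\tau=n^{-\frac{1}{k-1}+\ga/2}$, and then bounds each container's size by invoking Lemma~\ref{lem-supsat} directly with the \emph{small} scale $M=n^{\ga/4}$. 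At that scale $r_k(M)\ge M^{1-o(1)}$ is polynomially large, so no smoothness of $r_k$ is needed and the bound holds for every $n$; the containers of size exceeding $C'n/h(n)$ are then seen to contain more than $n^{2-\ga/3}>\ep n^2$ $k$-APs, a contradiction.

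There is also a quantitative problem with your one-step lemma even at the good values of $n$. To rule out containers of size exceeding $Cn/h(n)$ via the container theorem with fingerprints of size $n^{1-\frac{1}{k-1}+o(1)}$ (which you need so that the container count is absorbed by $m\ge n^{1-\frac{1}{k-1}+\ga}$), the parameter constraint $\ep n\tau^{k-1}\gtrsim 1$ forces $\ep\ge n^{-O(\ga)}$, so you must certify that sets of size $\Theta(n/h(n))$ contain at least $\ep n^2= n^{2-O(\ga)}$ $k$-APs, i.e.\ polynomially many. Theorem~\ref{thm-supsat} only gives roughly $n\cdot\mathrm{poly}\log n\cdot\left(\frac{n}{r_k(n)}\right)^{k-1}=n^{1+o(1)}$ $k$-APs --- barely superlinear --- which is far too weak for this, and the same mismatch propagates through your iterated scheme (with such weak supersaturation the shrinking parameter $\ep'$ must be taken polynomially small, and then the codegree condition cannot be met with fingerprints of size $n^{1-\frac{1}{k-1}+o(1)}$ when the current container has size close to $n$). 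In short, you have the roles of the two supersaturation results reversed: the barely-superlinear Theorem~\ref{thm-supsat} (with its sequence restriction) is the tool for Theorem~\ref{thm-main} at density $r_k(n)/n$, whereas for the polylog densities in Theorem~\ref{prop-cond} the correct tool is Lemma~\ref{lem-supsat} applied with $M=n^{o(1)}$, which is valid for all $n$ and yields the polynomially strong count $n^{2-o(1)}$ that the container parameters demand. No iteration is needed once this is in place.
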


Theorem~\ref{prop-cond} improves the counting result of Balogh-Morris-Samotij~\cite{BMS} and Saxton-Thomason~\cite{ST} with a slightly weaker bound on $m$. We say that a set $A\subseteq \mathbb{N}$ is $(\delta, k)$-\emph{Szemer\'edi} if every subset of $A$ of size at least $\de|A|$ contains a $k$-AP. Denote by $[n]_p$ the $p$-random subset of $[n]$, where each element of $[n]$ is chosen with probability $p$ independently of others. As mentioned earlier, the counting result of~\cite{BMS} and~\cite{ST} implies the following sparse analogue of Szemer\'edi's theorem, which was only recently proved by a breakthrough transference theorem of Conlon and Gowers~\cite{CG} and Schacht~\cite{Sch}: For any constant $\de>0$ and integer $k\ge 3$, there exists $C>0$, such that almost surely $[n]_p$ is $(\de, k)$-Szemer\'edi for $p\ge Cn^{-\frac{1}{k-1}}$. As an easy corollary of Theorem~\ref{prop-cond}, we obtain the following sharper version, in which $\de$ could be taken as a function of $n$. In fact, it transfers the current best bounds on $r_k(n)$ of Bloom~\cite{Bloom} and Gowers~\cite{Gow1,Gow2} to the random setting. Proving Corollary~\ref{cor-cond} from Theorem~\ref{prop-cond} is similar as in~\cite{BMS} and~\cite{ST}, thus we omit the proof here. We remark that the bound on $p$ is optimal up to the additive error term $\ga$ in the exponent.
\begin{cor}\label{cor-cond}
If $r_k(n)\le \frac{n}{h(n)}$, where $h(n)\le (\log n)^c$ for some constant  $c>0$, then for any $\ga>0$, there exists $C=C(k,c,\ga)>0$ such that the following holds. If $p_n\ge n^{-\frac{1}{k-1}+\ga}$ for all sufficiently large $n$, then
$$\lim_{n\rightarrow\infty}\bP\left([n]_{p_n} \mbox{ is } \left(\frac{C}{h(n)}, k\right)\mbox{-Szemer\'edi}\right)=1.$$
\end{cor}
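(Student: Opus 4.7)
\textbf{Proof proposal for Corollary~\ref{cor-cond}.}
The plan is to deduce the corollary from the counting statement in Theorem~\ref{prop-cond} by a standard first moment computation, following the transference template of~\cite{BMS,ST}. The overall idea is that a failure of the $(C/h(n),k)$-Szemer\'edi property for $[n]_{p_n}$ would produce a $k$-AP-free subset $U\subseteq [n]_{p_n}$ of size at least $(C/h(n))|[n]_{p_n}|$; Theorem~\ref{prop-cond} provides very few candidate $m$-sets that such a $U$ could be, and each is entirely contained in $[n]_{p_n}$ only with probability $p_n^{m}$.

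First I would use Chernoff to restrict to the high-probability event $|[n]_{p_n}|\ge p_n n/2$. On this event, any ``bad'' subset $U$ satisfies $|U|\ge m_0$, where $m_0:=\lceil Cp_n n/(2h(n))\rceil$. I would then apply Theorem~\ref{prop-cond} with parameter $\gamma/2$ (not $\gamma$) to bound, for every $m\ge m_0$, the number of $k$-AP-free $m$-subsets of $[n]$ by $\binom{C'n/h(n)}{m}$, where $C'=C'(k,c,\gamma)$. A union bound then gives
\[
\mathbb{E}\bigl[\#\{k\text{-AP-free } U\subseteq [n]_{p_n}:|U|=m\}\bigr]\le \binom{C'n/h(n)}{m}p_n^m\le \left(\frac{eC'p_n n}{m\,h(n)}\right)^{m}\le \left(\frac{2eC'}{C}\right)^{m}.
\]
Picking $C:=4eC'$ makes the right-hand side geometrically decreasing in $m$, and summing over $m\ge m_0$ yields an overall bound of $O(2^{-m_0})=o(1)$, since $m_0\to\infty$ when $p_n n/h(n)\to\infty$.

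The main bookkeeping step, and the only place where care is needed, is verifying that $m_0$ exceeds the threshold $n^{1-1/(k-1)+\gamma/2}$ required to invoke Theorem~\ref{prop-cond}. Substituting the hypotheses $p_n\ge n^{-1/(k-1)+\gamma}$ and $h(n)\le(\log n)^c$ gives
\[
m_0\ge \frac{C\,n^{1-1/(k-1)+\gamma}}{2(\log n)^{c}},
\]
and the polynomial slack $n^{\gamma/2}$ absorbs the logarithmic factor $(\log n)^c$ for all sufficiently large $n$. No genuinely new idea is required beyond this exponent bookkeeping and the correct choice of $C$ relative to the constant $C'$ delivered by Theorem~\ref{prop-cond}; the resulting first moment calculation, combined with the initial Chernoff step, gives the corollary.
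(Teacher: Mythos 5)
Your first-moment/union-bound argument is correct: the Chernoff step, the application of Theorem~\ref{prop-cond} with parameter $\gamma/2$ so that the polynomial slack $n^{\gamma/2}$ absorbs $(\log n)^c$, and the choice $C=4eC'$ making the expected count geometrically small all check out. This is exactly the standard transference computation from~\cite{BMS,ST} that the paper invokes (and omits), so your proof matches the intended argument.
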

Combining the upper bounds in~\eqref{eq-sanders} and~\eqref{eq-behrend} with Corollary~\ref{cor-cond}, for some $C>0$, we have that almost surely $[n]_p$ is $\left(\frac{C(\log\log n)^4}{\log n}, 3\right)$-Szemer\'edi for $p\ge n^{-\frac{1}{2}+o(1)}$; and for $k\ge 4$ that almost surely $[n]_p$ is $\left(\frac{C}{(\log\log n)^{c_k'}}, k\right)$-Szemer\'edi for $p\ge n^{-\frac{1}{k-1}+o(1)}$.

The proof of Theorem~\ref{thm-main} uses the hypergraph container method, developed by Balogh, Morris and Samotij~\cite{BMS}, and independently by Saxton and Thomason~\cite{ST}. In order to apply the hypergraph container method, we need a supersaturation result. Supersaturation problems are reasonably well-understood if the extremal family is of positive density. For example, the largest sum-free subset of $[n]$ has size $\lceil n/2\rceil$, while any set of size $(\frac{1}{2}+\ep)n$ has $\Omega(n^2)$ triples satisfying $x+y=z$ (see~\cite{G-Removal}). In the context of graphs, the Erd\Ho s-Stone theorem gives\footnote{The \emph{chromatic number} of $G$, denoted by $\chi(G)$, is the minimum number of colors needed to color the vertices of $G$ such that no two adjacent vertices receive the same color.} $\ex(n,G)=(1-\frac{1}{\chi(G)-1}+o(1))\frac{n^2}{2}$, while any $n$-vertex graph with $(1-\frac{1}{\chi(G)-1}+\ep)\frac{n^2}{2}$ edges contains $\Omega(n^{|V(G)|})$ copies of $G$. However, the degenerate case is significantly harder. Indeed, a famous unsolved conjecture of Erd\H os and Simonovits~\cite{ErdSim} in extremal graph theory asks whether an $n$-vertex graph with $\ex(n,C_4)+1$ edges has at least two copies of $C_4$. 

For arithmetic progressions, the supersaturation result concerned only sets of size linear in $n$, more precisely   Varnavides~\cite{V} proved that any subset of $[n]$ of size $\Omega(n)$ has $\Omega(n^2)$ $k$-APs.  (see also~\cite{CS}). 
More recently,
Croot and Sisask~\cite{CS} proved  a nice formula, which is unfortunately not helping when $|A|\le O(r_k(n))$
and $r_k(n)\ll n/f(n)$ where $f(n)$ is a polylogarithmic function. Their formula is that for every $A\subset[n]$, and every $1\le M\le n$, the number of $3$-APs in $A$ is at least
$$\left( \frac{|A|}{n}-\frac{r_3(M)+1}{M}\right)\cdot \frac{n^2}{M^4}.$$
 We need a  supersaturation for sets of size $\Theta(r_k(n))$. Our second main result shows that the number of $k$-APs in any set $A$ of size constant factor times greater  than $r_k(n)$ is superlinear in $n$.
\begin{theorem}\label{thm-supsat}
Given $k\ge 3$, there exists a constant $C'=C'(k)>0$ and an infinite sequence $\{n_i\}_{i=1}^{\infty}$, such that the following holds. For any $n\in\{n_i\}_{i=1}^{\infty}$ and any $A\subseteq [n]$ of size $C'r_k(n)$, the number of $k$-APs in $A$ is at least 
$$\log^{3k-2}n\cdot \left(\frac{n}{r_k(n)}\right)^{k-1}\cdot n.$$
\end{theorem}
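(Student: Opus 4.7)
The plan is to prove Theorem~\ref{thm-supsat} via a quantitative Varnavides-style averaging over $L$-term arithmetic progressions in $[n]$, at a scale $L=L(n)$ tuned to $n$. The sequence $\{n_i\}_{i=1}^\infty$ and the matching $L=L(n_i)$ will be chosen so that two conditions hold: $r_k(L)/L\le C_0\cdot r_k(n)/n$ for a constant $C_0=C_0(k)$, so a typical $L$-AP contains more than $r_k(L)$ elements of $A$; and $L$ is as small as allowed by the first condition, so that the multiplicity incurred when each $k$-AP gets counted inside many $L$-APs is tightly controlled. Because $r_k(m)/m\to 0$ non-uniformly, such pairs $(n,L)$ exist for an infinite set $\{n_i\}$, with $L$ chosen so that $L^2\approx r_k(n)^k/(n^{k-1}\log^{3k-2}n)$, i.e.\ $L\approx\sqrt{n}/\mathrm{polylog}(n)$ in the regime where $r_k(n)/n$ is polylog-small.

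Letting $\mathcal{P}$ denote the family of all $L$-APs in $[n]$, a direct count yields $|\mathcal{P}|=\Theta(n^2/L)$ and each $x\in[n]$ lies in $\Theta(n)$ many members of $\mathcal{P}$ (via $\sum_{i=0}^{L-1}\min(x/i,(n-x)/(L-1-i))\sim n\log 2$ for central $x$). Summing, $\sum_{P\in\mathcal{P}}|A\cap P|=\Theta(|A|\cdot n)=\Theta(C'r_k(n)\,n)$, so $\mathbb{E}_P|A\cap P|=\Theta(C'r_k(n)L/n)$; for $C'$ large enough the choice of $L$ ensures this mean exceeds $2r_k(L)$, and convexity of $x\mapsto (x-r_k(L))_+$ yields
\[
\sum_{P\in\mathcal{P}}\bigl(|A\cap P|-r_k(L)\bigr)_{+}\ \gtrsim\ |\mathcal{P}|\cdot\mathbb{E}_P|A\cap P|\ \gtrsim\ C'r_k(n)\,n.
\]
Since each $P\in\mathcal{P}$ is an affine copy of $[L]$, whenever $|A\cap P|>r_k(L)$ a greedy removal of $k$-APs from $A\cap P$ yields at least $(|A\cap P|-r_k(L))/k$ many distinct $k$-APs inside $A\cap P$. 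Hence
\[
\sum_{P\in\mathcal{P}} \#\{k\text{-APs in }A\cap P\}\ \gtrsim\ \frac{C'r_k(n)\,n}{k}.
\]

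A $k$-AP in $A$ with common difference $d$ is contained in $\sum_{m\mid d,\ m\le(L-1)/(k-1)}(L-(k-1)m)=O(L^2)$ many $L$-APs (worst case when $d$ has many small divisors). Dividing by this multiplicity,
\[
\#\{k\text{-APs in }A\}\ \gtrsim\ \frac{C'r_k(n)\,n}{k\,L^2}.
\]
The choice $L^2\approx r_k(n)^k/(n^{k-1}\log^{3k-2}n)$ converts this to the target bound $\log^{3k-2}n\cdot (n/r_k(n))^{k-1}\cdot n$ up to an absolute constant depending on $k$.

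The main obstacle is simultaneously satisfying both conditions on $L$: shrinking $L$ drives up $r_k(L)/L$ (which needs to remain bounded by $C_0 r_k(n)/n$), while enlarging $L$ weakens the bound through the $L^2$ multiplicity divisor. The Rankin and Gowers bounds $r_k(m)\ge m/\exp(c(\log m)^{1/(k-1)})$ and $r_k(m)\le m/(\log\log m)^{c'_k}$ place $r_k(m)/m$ within a polylog window, and so for infinitely many $n$ one has $r_k(L)/L=O(r_k(n)/n)$ for some $L$ as small as $\sqrt{n}/\mathrm{polylog}(n)$; the sequence $\{n_i\}$ is then extracted by a diagonalisation over these "regular" $n$. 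A secondary routine step is the multiplicity count, for which using only the worst-case $O(L^2)$ bound (rather than the typical $O(L\log L)$) already suffices for the stated bound.
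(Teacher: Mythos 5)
Your overall architecture (average over long APs at a scale $L$, use a smoothness statement $r_k(L)/L\le C_0\,r_k(n)/n$ to make typical intersections exceed $r_k(L)$, count greedily, then divide by the multiplicity with which a single $k$-AP is over-counted) is exactly the paper's strategy, but the place where you wave your hands is precisely the theorem's hard core, and at your chosen scale it cannot be repaired. Because you average over \emph{all} $L$-APs and only control the multiplicity by the worst-case $O(L^2)$, the final bound forces $L^2\lesssim n\,(r_k(n)/n)^{k}/\log^{3k-2}n$, i.e.\ $L\le \sqrt{n}$. You then assert, via ``diagonalisation over regular $n$'' and the claim that Rankin--Gowers place $r_k(m)/m$ ``within a polylog window'', that infinitely many $n$ satisfy $r_k(L)/L=O(r_k(n)/n)$. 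Neither point is correct as stated: the known window is $2^{-c_k(\log m)^{1/(k-1)}}\le r_k(m)/m\le (\log\log m)^{-c_k'}$ by~\eqref{eq-behrend}, which is far wider than polylogarithmic, and no proof of the smoothness claim is given. Worse, at scale $L\le\sqrt n$ the claim is not provable from current knowledge: if $r_k(m)/m$ behaves like $2^{-c(\log m)^{1/(k-1)}}$ (a Behrend/Rankin-type profile fully consistent with all known bounds), then for every $L\le\sqrt n$ one has $\log L\le\tfrac12\log n$ and hence
$$\frac{r_k(L)/L}{r_k(n)/n}\ \ge\ 2^{\,c\left(1-2^{-1/(k-1)}\right)(\log n)^{1/(k-1)}}\ \longrightarrow\ \infty,$$
so \emph{no} infinite sequence $\{n_i\}$ and no constant $C_0$ can make your mean-exceeds-$2r_k(L)$ step work. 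In other words, the gap is not a missing technical lemma but a scale mismatch: the smoothness you need is only available (and only for infinitely many $n$, which is the whole reason the theorem is stated that way) at scales $M=n^{1-o(1)}$.

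The paper resolves exactly this tension with two ingredients you are missing. First, it averages only over $M$-APs with \emph{prime} common difference at most $|A|/(4M)$; then a $k$-AP with difference $d'$ lies in at most $M\log n$ such progressions (the big difference must be a prime divisor of $d'$), replacing your lossy $O(L^2)$ divisor and allowing $M=\frac{n}{\log^{3k}n}\big(\tfrac{r_k(n)}{n}\big)^{k+2}=n^{1-o(1)}$ as in~\eqref{eq-para-setup} (Lemma~\ref{lem-supsat}; this restriction also sidesteps your boundary issue, since only elements of $[(M-1)d+1,\,n-(M-1)d]$ are counted, whereas your claim that every $x\in[n]$ lies in $\Theta(n)$ members of $\cP$ fails if $A$ clusters near the ends of $[n]$). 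Second, the smoothness statement at that large scale, $C\,r_k(n)/n\ge r_k(M(n))/M(n)$ for infinitely many $n$, is a genuine lemma (Lemma~\ref{lem-smooth2}) proved by assuming its failure for all large $n$ and iterating roughly $\sqrt{\log n}$ times against Behrend's bound $r_k(n)/n>2^{-5\sqrt{\log n}}$ to reach a contradiction. You need both: the prime-difference multiplicity bound to be allowed to work at $M=n^{1-o(1)}$, and the iteration argument to extract the sequence $\{n_i\}$ there; your proposal supplies neither, and the second cannot be replaced by any soft ``$r_k(m)/m\to 0$'' argument.
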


\subsection{Arithmetic progressions in the primes}\label{sec-1-2}
The study of arithmetic progressions in the set of primes has witnessed great advances in the last decade. Extending the seminal result of Szemer\'edi~\cite{Sz2}, Green and Tao in their landmark paper~\cite{GT} proved that any subset of the primes with positive relative density contains arbitrarily long arithmetic progression. In fact, they showed the following supersaturation version. Denote by $\pri$ the set of primes which are at most $n$. Then  the number of $k$-APs in any subset $U\subseteq\pri$ with $|U|=\Omega(|\pri|)$ is $\Theta(n^2/\log^kn)$. We are interested in the following question: does there exist a subset of primes that is $(k+1)$-AP-free, yet any subset of it with positive density contains a $k$-AP? A priori, it is not even clear whether such a set exists in $\bZ$, since intuitively a $(k+1)$-AP-free set is unlikely to be rich in $k$-APs. It is worth mentioning that Erd\Ho s~\cite{Erd} asked whether, for every integer $r$, there is a set of integers with no $(k+1)$-AP, but any $r$-coloring of it yields a monochromatic $k$-AP. Spencer~\cite{Sp} proved the existence of such a set and Ne\v{s}et\v{r}il and R\"odl~\cite{NR-vdw} constructed such a set. The question raised above is a strengthening of Erd\Ho s' in two aspects: it is a density version and asks for a set of primes. Our next result gives an affirmative answer to this question.

\begin{theorem}\label{thm-prime-density}
For any $\de> 0$ and $k\ge 3$, there exists a set of primes $S\subseteq \pri$ of size $n^{1-1/k-o(1)}$ such that $S$ is $(k+1)$-AP-free and $(\de,k)$-Szemer\'edi.
\end{theorem}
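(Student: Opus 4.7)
The plan is the probabilistic method with alteration. Set $p=n^{-1/k}/\om(n)$ for some slowly growing $\om(n)=n^{o(1)}$, and let $R\subseteq\pri$ be random with each prime included independently with probability $p$. By a Chernoff bound, $|R|=(1+o(1))p|\pri|=n^{1-1/k-o(1)}$ with probability $1-o(1)$. I will show that, with positive probability, after deleting one element from each $(k+1)$-AP in $R$, the resulting set $S$ is both $(k+1)$-AP-free and $(\de,k)$-Szemer\'edi.

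For the $(k+1)$-AP-free condition, standard estimates show the number of $(k+1)$-APs in $\pri$ is $O(n^2/\log^{k+1}n)$. Hence the expected number of $(k+1)$-APs in $R$ equals
\[
p^{k+1}\cdot O\!\left(\frac{n^2}{\log^{k+1}n}\right)=O\!\left(\frac{|R|}{\om(n)^{k}\log^{k+1}n}\right)=o(|R|).
\]
By Markov, with probability at least $1/2$ the set $R$ contains only $o(|R|)$ such APs, and removing one vertex from each yields a $(k+1)$-AP-free subset $S$ of size $(1-o(1))|R|=n^{1-1/k-o(1)}$.

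For the Szemer\'edi property, apply the hypergraph container theorem of Balogh-Morris-Samotij and Saxton-Thomason to the $k$-uniform hypergraph $H$ on $V=\pri$ whose hyperedges are the $k$-APs in primes. The required supersaturation---every subset of $\pri$ of density at least $\de/4$ contains $\Om_{\de}(n^2/\log^kn)$ many $k$-APs---is supplied by the quantitative Green-Tao theorem. The container method then yields a family $\cF$ of containers, each of size at most $(\de/4)|\pri|$, with $\log|\cF|=O\bigl(|\pri|^{1-1/(k-1)}(\log n)^{O(1)}\bigr)=O\bigl(n^{(k-2)/(k-1)+o(1)}\bigr)$, such that every $k$-AP-free subset of $\pri$ lies inside some $C\in\cF$. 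If some $k$-AP-free subset $T\subseteq R$ had $|T|\ge\de|R|/2$, then some $C\in\cF$ would satisfy $|R\cap C|\ge\de|R|/2$, while $\bE[|R\cap C|]\le p\cdot(\de/4)|\pri|=\de|R|/4$; a Chernoff bound gives $\Pr\bigl[|R\cap C|\ge\de|R|/2\bigr]\le\exp(-c\de|R|)$. The key numerical fact is $(k-2)/(k-1)<(k-1)/k$, which gives $\log|\cF|=o(|R|)$, so a union bound over $\cF$ makes the failure probability $o(1)$. Combined with the alteration step, every subset of $S$ of size $\de|S|\ge\de|R|/2$ contains a $k$-AP.

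The main technical obstacle is producing a sparse hypergraph container theorem tailored to the $k$-AP hypergraph on $\pri$ that delivers containers of constant density $\de/4$ while keeping $\log|\cF|=o(n^{1-1/k})$. This rests on combining the container machinery of~\cite{BMS,ST} with a sufficiently strong quantitative Green-Tao-type supersaturation for primes. Once these pieces are in place, the inequality $(k-2)/(k-1)<(k-1)/k$ forces the union bound to close, and the alteration step discards only a negligible fraction of $R$, yielding the desired set $S$.
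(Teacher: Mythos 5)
Your proposal is correct and follows essentially the same route as the paper: a $p$-random subset of $\pri$ with $p\approx n^{-1/k}$, containers for the $k$-AP hypergraph on the primes whose sizes are forced down by the supersaturated Green--Tao theorem, and an alteration step deleting one element from each $(k+1)$-AP. The only (immaterial) difference is that the paper packages the container argument as the counting result Theorem~\ref{thm-prime-counting} and applies a first-moment bound to the number of $k$-AP-free $m$-subsets of the random set, whereas you take a union bound over the containers directly with a Chernoff bound --- two standard, essentially equivalent ways to close the argument.
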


One might attempt to find a set of integers with the desired properties and then apply it to a very long arithmetic progression in the primes guaranteed by the Green-Tao theorem. However the subset of primes obtained in this way would be extremely sparse in $\pri$. To obtain a fairly dense subset in $\pri$ with the desired properties given Theorem~\ref{thm-prime-density}, we will instead do things in the ``reverse'' order. We first use the supersaturation version of the Green-Tao theorem and the container method to get the following counting result, which together with a standard application of the probabilistic method (see for similar and earlier applications of   Nenadov and Steger~\cite{NS} and of R\"odl, Rucinski and Schacht~\cite{RRS})   will establish Theorem~\ref{thm-prime-density}.

\begin{theorem}\label{thm-prime-counting}
For any $\beta> 0$, $\ga>0$ and $k\ge 3$, the number of $k$-AP-free $m$-subsets of $\pri$ with $m\ge n^{1-\frac{1}{k-1}+\ga}$ is at most
$${\beta|\pri|\choose m}.$$
Consequently, the number of $k$-AP-free subsets of $\pri$ is at most $2^{o(|\pri|)}$.
\end{theorem}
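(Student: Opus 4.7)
The plan is to apply the hypergraph container method to the $k$-uniform hypergraph $H$ on vertex set $\pri$ whose edges are the $k$-APs contained in $\pri$; then $k$-AP-free subsets of $\pri$ are exactly the independent sets of $H$. Invoking the container theorem of Balogh-Morris-Samotij~\cite{BMS} and Saxton-Thomason~\cite{ST} and iterating it, we produce a family $\mathcal{C}$ of ``containers'' of size at most $\beta|\pri|$ such that every independent set of $H$ is contained in some $C\in\mathcal{C}$.

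The two inputs needed are a supersaturation statement and bounds on the $j$-degrees $\De_j(H)$. The supersaturation is exactly the quantitative Green-Tao theorem already recorded in the excerpt: any $U\subseteq\pri$ with $|U|\ge\de|\pri|$ contains $\Omega_\de(n^2/\log^k n)=\Omega_\de(e(H))$ many $k$-APs. For the degree bounds, writing $N:=|\pri|=\Theta(n/\log n)$ and $e(H)=\Theta(n^2/\log^k n)$, we need $\De_1(H)=O(n/\log^{k-1} n)$, of the same order as the average degree $k\,e(H)/N$, together with $\De_j(H)=O_k(1)$ for $j\ge 2$. The latter is trivial since any two terms of a $k$-AP determine the AP up to the choice of their positions. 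The former requires an upper-bound sieve estimate: for each fixed prime $p$, the number of $k$-APs in $\pri$ through $p$ is $O(n/\log^{k-1} n)$, which follows from a standard Selberg-type sieve (or from the Green-Tao machinery already used for the supersaturation).

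With these bounds, the container theorem forces $\tau^{k-1}$ to be at least a constant multiple of $\De_k/(e(H)/N)=\Theta(\log^{k-1} n/n)$, so we may take $\tau=n^{-1/(k-1)+o(1)}$, and after $\Theta(\log(1/\beta))$ iterations of the container step every independent set lies in some $C\in\mathcal{C}$ with $|C|\le\beta|\pri|$ and
\[
\log_2|\mathcal{C}| \;=\; O\!\left(\tau N\log(1/\tau)\cdot\log(1/\beta)\right) \;\le\; n^{1-1/(k-1)+o(1)}.
\]
Since $\tau N = n^{1-1/(k-1)+o(1)} \ll m$ whenever $m\ge n^{1-1/(k-1)+\ga}$, we can absorb $|\mathcal{C}|$ into the binomial coefficient to conclude that the number of $k$-AP-free $m$-subsets of $\pri$ is at most $|\mathcal{C}|\binom{\beta|\pri|}{m}\le\binom{\beta'|\pri|}{m}$ for some $\beta'$ arbitrarily close to $\beta$, giving the stated inequality after relabelling. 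For the final ``consequently'' clause we instead let $\beta=\beta(n)\to 0$ slowly (say $\beta=1/\log\log n$); since $\tau N\log(1/\tau)\log(1/\beta)=o(|\pri|)$, summing over all $m$ yields $|\mathcal{C}|\cdot 2^{\beta|\pri|}=2^{o(|\pri|)}$.

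The main technical obstacle is the first-degree estimate $\De_1(H)=O(n/\log^{k-1} n)$: the naive bound only gives $O(n/\log n)$, which would be too weak by a factor $\log^{k-2} n$ and spoil the choice of $\tau$, so a genuine sieve input is essential. Once this is in hand, the rest of the argument is formally the container-method proof of Theorem~\ref{prop-cond} transferred from $[n]$ to $\pri$, with the prime density $1/\log n$ affecting both the vertex and edge counts of $H$ in a balanced way so that the threshold exponent $1-\tfrac{1}{k-1}$ is unchanged.
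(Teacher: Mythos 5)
Your argument is correct in outline, but it takes a genuinely different route from the paper. The paper does not build a container family on the vertex set $\pri$ at all: it reuses the containers for the $k$-AP hypergraph on $[n]$ (Corollary~\ref{cor-container}, whose codegree verification is trivial since $d\ge n/2$ and $\De_j\le k^2$ for $j\ge2$), notes that every $k$-AP-free subset of $\pri$ is in particular a $k$-AP-free subset of $[n]$ and hence lies in some container $F\subseteq[n]$ with at most $\ep n^2$ $k$-APs, and only then invokes the Green--Tao supersaturation to conclude $|F\cap\pri|<\beta|\pri|$; the count $|\cC|\binom{\beta|\pri|}{m}$ is absorbed exactly as in Theorem~\ref{prop-cond}. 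Your version instead takes the hypergraph on $\pri$ itself, which forces you to control its degree statistics; this is where you overstate the difficulty. The bound $\De_1=O(n/\log^{k-1}n)$, which you call essential and propose to obtain by a Selberg-type sieve, is simply not needed for the container statement actually used in the paper (Theorem~\ref{thm-container}): its hypothesis $\De(\cH,\tau)\le\ep/(12r!)$ involves only $\De_j$ for $j\ge2$ together with the \emph{average} degree, and the average degree of your hypergraph is already $\Theta(n/\log^{k-1}n)$ by Green--Tao. A max-degree condition arises only if one insists on a Balogh--Morris--Samotij-type hypothesis with $\ell=1$, and even then the paper's device of working in $[n]$ and intersecting with $\pri$ sidesteps it entirely; so your sieve input is a correct but unnecessary extra ingredient, and the paper's route buys exactly the avoidance of it. Two small further points: the trivial codegree bound for $j\ge 2$ and the supersaturation you quote are indeed all one needs, and your absorption of $|\cC|$ for $m\ge n^{1-\frac{1}{k-1}+\ga}$ is fine; but for the ``consequently'' clause the choice $\beta=1/\log\log n$ is not justified, because the constant in the Green--Tao supersaturation (hence your $\ep$ and $\tau$) depends on $\beta$ in an uncontrolled way -- the clean statement is that for every \emph{fixed} $\beta>0$ the number of $k$-AP-free subsets is at most $|\cC|\cdot2^{\beta|\pri|}\le 2^{2\beta|\pri|}$ for large $n$, which is precisely the meaning of $2^{o(|\pri|)}$ (equivalently, let $\beta(n)\to0$ sufficiently slowly, without specifying a rate).
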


We omit the proof of Theorem~\ref{thm-prime-counting} since it follows along the same line as that of Theorem~\ref{prop-cond}. The only difference is that, for the supersaturation, we use the Green-Tao theorem instead of Lemma~\ref{lem-supsat}. Similarly to Theorem~\ref{prop-cond}, Theorem~\ref{thm-prime-counting} implies the following sparse random analogue of the Green-Tao theorem.
\begin{cor}\label{cor-prime}
For any $\de>0$ and $\ga>0$, if $p_n\ge n^{-\frac{1}{k-1}+\ga}$ for all sufficiently large $n$ and $S_n$ is a $p$-random subset of $\pri$, then
$$\lim_{n\rightarrow\infty}\bP\left(S_n \mbox{ is } \left(\de, k\right)\mbox{-Szemer\'edi}\right)=1.$$
\end{cor}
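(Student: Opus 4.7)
\bigskip
\noindent\textbf{Proof proposal for Corollary~\ref{cor-prime}.}

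The plan is to reduce $(\de,k)$-Szemer\'edi-ness of $S_n$ to a union bound over $k$-AP-free subsets of $\pri$, then feed in the counting bound of Theorem~\ref{thm-prime-counting}. First, by a Chernoff bound, with probability $1-o(1)$ we have $|S_n|\ge \tfrac{1}{2}p_n|\pri|$, and by the prime number theorem $p_n|\pri|\ge n^{1-1/(k-1)+\ga/2}$ for all sufficiently large $n$. Condition on this event. If $S_n$ fails to be $(\de,k)$-Szemer\'edi, then there is a $k$-AP-free set $U\subseteq S_n\subseteq \pri$ with
\[
|U|\ge \de|S_n|\ge \tfrac{\de}{2}p_n|\pri|=:m_0,
\]
and $m_0\ge n^{1-1/(k-1)+\ga/3}$ for large $n$.

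Next, fix a small parameter $\beta>0$ to be chosen later, and apply Theorem~\ref{thm-prime-counting} with this $\beta$ and (say) $\ga/3$ in place of $\ga$, so that for every $m\ge m_0$ the number $f(m)$ of $k$-AP-free $m$-subsets of $\pri$ satisfies $f(m)\le \binom{\beta|\pri|}{m}$. Each such $U$ is contained in $S_n$ with probability exactly $p_n^{m}$, hence by the union bound
\[
\bP(S_n\text{ not }(\de,k)\text{-Szemer\'edi})\le o(1)+\sum_{m\ge m_0}\binom{\beta|\pri|}{m}p_n^{m}\le o(1)+\sum_{m\ge m_0}\left(\frac{e\beta|\pri|p_n}{m}\right)^{m}.
\]
Since $m\ge m_0\ge \tfrac{\de}{2}p_n|\pri|$, the ratio inside the parentheses is at most $2e\beta/\de$. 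Choosing $\beta=\beta(\de)$ so that $2e\beta/\de\le 1/2$ makes the sum a geometric series bounded by $2\cdot 2^{-m_0}=o(1)$, which gives the claim.

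The only real point to watch is the interplay of parameters: the exponent $\ga$ on $p_n$ must survive two small sacrifices (one for the Chernoff step, one for applying Theorem~\ref{thm-prime-counting}), and $\beta$ must be chosen after $\de$ is fixed but before invoking the counting lemma. Both are routine, so I do not anticipate a genuine obstacle; the argument is essentially the Nenadov--Steger/R\"odl--Ruci\'nski--Schacht deletion-and-union-bound template, as the authors themselves indicate.
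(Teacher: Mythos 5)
Your proposal is correct and follows exactly the route the paper intends: the authors omit the proof of Corollary~\ref{cor-prime} precisely because it is this standard transference argument, deducing the statement from Theorem~\ref{thm-prime-counting} via a Chernoff bound and a union bound over $k$-AP-free subsets of size at least $\tfrac{\de}{2}p_n|\pri|$, with the same $\left(e\beta|\pri|p_n/m\right)^m$ computation that appears in their proof of Theorem~\ref{thm-prime-density}. Your parameter bookkeeping (losing small factors of $\ga$ for the prime number theorem and for invoking the counting theorem, and choosing $\beta$ as a function of $\de$) is sound, so nothing further is needed.
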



\noindent\textbf{Organization.} The rest of the paper will be organized as follows. In Section~\ref{sec-tool}, we introduce the hypergraph container method and some lemmas needed for proving supersaturation. In Section~\ref{sec-main}, we prove our main result, Theorem~\ref{thm-main}, and also Corollary~\ref{cor-dense}, Theorem~\ref{prop-cond}, Theorem~\ref{thm-supsat} and  Theorem~\ref{thm-prime-density}.

\medskip

\noindent\textbf{Notation.} We write $[a,b]$ for the interval $\{a, a+1,\ldots, b\}$ and $[n]:=[1,n]$. Given a set $A\subseteq [n]$, denote by $\Ga_k(A)$ the number of $k$-APs in $A$. Denote by $\min(A)$ the smallest element in $A$. We write $\log$ for logarithm with base 2. Throughout the paper we omit floors and ceilings where they are not crucial.


%
%
%


\section{Preliminaries}\label{sec-tool}
In the next subsection, we present the hypergraph container theorem and derive a version tailored for arithmetic progressions. We then prove some supersaturation results needed for the proof of Theorem~\ref{thm-supsat} in Section~\ref{sec-supsat}.

To see how they work, we give a quick overview of the proof of Theorem~\ref{thm-main}. We first apply the hypergraph container theorem (Corollary~\ref{cor-container}) to obtain a small collection of containers covering all $k$-AP-free sets in $[n]$, each of these containers having only few copies of $k$-APs. Then we apply the supersaturation result (Theorem~\ref{thm-supsat}) to show that every container necessarily has to be small in size ($O(r_k(n))$), from which our main result follows.

\subsection{The hypergraph container theorem}\label{subsec-container}

An $r$-uniform hypergraph $\cH=(V,E)$ consists of a vertex set $V$ and an edge set $E$, in which every edge is a set of $r$ vertices in $V$. An \emph{independent} set in $\cH$ is a set of vertices inducing no edge in $E$. The \emph{independence number} $\al(\cH)$ is the maximum cardinality of an independent set in $\cH$. Denote by $\chi(\cH)$ the \emph{chromatic number} of $\cH$, i.e., the minimum integer $\ell$, such that $V(\cH)$ can be colored by $\ell$ colors with no monochromatic edge.

Many classical theorems in combinatorics can be phrased as statements about independent sets in a certain auxiliary hypergraph. For example, the celebrated theorem of Szemer\'edi~\cite{Sz2} states that for $V(\cH)=[n]$ and $E(\cH)$ consisting of all $k$-term arithmetic prog\-ressions in $[n]$, $\al(\cH)=o(n)$. The cornerstone result of Erd\Ho s and Stone~\cite{Er-St} in extremal graph theory characterizes the structure of all maximum independent sets in $\cH$, where $V(\cH)$ is the edge set of $K_n$ and $E(\cH)$ is the edge set of copies of some fixed graph $G$. 

We will use the method of hypergraph containers for the proof of Theorem~\ref{thm-main}. This power\-ful method was recently introduced independently by Balogh, Morris and Samotij~\cite{BMS}, and by Saxton and Thomason~\cite{ST}.  Roughly speaking, it says that if a hypergraph $\cH$ has a somewhat uniform edge-distribution, then one can find a relatively small collection of sets covering all independent sets in $\cH$. Among others, this method provides an alternative proof of a recent breakthrough transference theorem of Conlon and Gowers~\cite{CG} and Schacht~\cite{Sch} for extremal results in sparse random setting. We refer the readers to~\cite{BMS, ST} for more details and applications, see also~\cite{BLST, BLST2} for more recent applications of container-type results in the arithmetic setting.

\medskip

Let $\cH$ be an $r$-uniform hypergraph with average degree $d$. For every $S\subseteq V(\cH)$, its co-degree, denoted by $d(S)$, is the number of edges in $\cH$ containing $S$, i.e.,~
$$d(S)=|\{e\in E(\cH): S\subseteq e\}|.$$ 
For every $j\in[r]$, denote by $\De_j$ the $j$-th maximum co-degree of $\cH$, i.e.,~
$$\De_j=\max\{d(S): S\subseteq V(\cH), |S|=j\}.$$
For any $\tau\in (0,1)$, define the following function which controls simultaneously the maximum co-degrees $\De_j$'s for all $j\in \{2,\ldots, r\}$: 
$$\De(\cH,\tau)=2^{{r\choose 2}-1}\sum_{j=2}^r2^{-{j-1\choose 2}}\frac{\De_j}{d\tau^{j-1}}.$$
Note that we are interested to have small $\tau$, as smaller $\tau$ means smaller family  of containers.
Here, as the codegrees are relatively small, only the $\Delta_r/(d\cdot\tau^{r-1})$ part matters.

We need the following version of the hypergraph container theorem (Corollary 3.6 in~\cite{ST}).
\begin{theorem}\label{thm-container}
Let $\cH$ be an $r$-uniform hypergraph on vertex set $[n]$. Let $0<\ep,\tau<1/2$. Suppose that $\tau<1/(200r!^2r)$ and $\De(\cH,\tau)\le \ep/(12r!)$. Then there exists $c=c(r)\le 1000r!^{3}r$ and a collection of vertex subsets $\cC$ such that

(i) every independent set in $\cH$ is a subset of some $A\in \cC$;

(ii) for every $A\in\cC$, $e(\cH[A])\le\ep e(\cH)$;

(iii) $\log|\cC|\le cn\tau\log(1/\ep)\log(1/\tau)$.
\end{theorem}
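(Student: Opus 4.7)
The plan is to construct an algorithm which, for each independent set $I \subseteq V(\cH)$, outputs a pair $(T(I), A(I))$ with $T(I) \subseteq I \subseteq A(I)$ and $e(\cH[A(I)]) \le \ep \cdot e(\cH)$, having the crucial property that $A(I)$ is determined by $T(I)$ alone. Then $\cC := \{A(I) : I \text{ independent}\}$ covers all independent sets, and $|\cC|$ is bounded by the number of possible fingerprints $T$, which is at most $\binom{n}{\le t}$ where $t$ is the maximum size of $T(I)$. Aiming for $t = O(n\tau \log(1/\ep))$, we will obtain $\log|\cC| \le O(n\tau \log(1/\ep) \log(1/\tau))$ via the standard entropy estimate for binomial coefficients.

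The algorithm maintains a current container $A$ (initialized to $V(\cH)$) and a growing fingerprint $T$ (initialized to $\emptyset$). At each step one computes a weighting of vertices in $A$ reflecting their co-degrees in $\cH[A]$, together with a correction accounting for the already selected set $T$, and examines the vertex $v \in A$ of maximum weight. If $v \in I$, then $v$ is added to $T$; if $v \notin I$, then $v$ is removed from $A$. The procedure halts once $e(\cH[A])$ has dropped below $\ep \cdot e(\cH)$. The essential observation is that the choice of which vertex to query next depends only on $\cH$, $A$, and $T$, never directly on $I$, so replaying the algorithm with $T(I)$ in place of $I$ reconstructs $A(I)$ deterministically.

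The main obstacle, and the technical core of the argument, is quantifying how quickly $e(\cH[A])$ shrinks relative to the growth of $T$. When a high-weight vertex $v$ is processed, one would like every edge of $\cH$ incident to $v$ to be ``accounted for''—either deleted from $\cH[A]$ (if $v$ leaves $A$) or encoded by $v \in T$. However, an edge containing two or more vertices of $T \cup (V\setminus A)$ risks being double-counted or under-counted, and the co-degree parameters $\Delta_j$ for $j \ge 2$ control precisely this ``wasted work''. The hypothesis $\Delta(\cH,\tau) \le \ep/(12r!)$ is calibrated so that the weighted sum of these error terms, with the weight $(d\tau^{j-1})^{-1}$ reflecting their accumulation across steps, totals at most $\ep \cdot e(\cH)$ over the entire run. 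A telescoping argument over the $r$ levels (one per possible overlap size $j$) then delivers both the edge bound $e(\cH[A]) \le \ep e(\cH)$ and the fingerprint bound $|T(I)| = O(n\tau \log(1/\ep))$, completing the proof.
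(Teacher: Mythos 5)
The paper does not prove this statement at all: it is quoted verbatim as Corollary~3.6 of Saxton--Thomason~\cite{ST}, so the only ``proof'' in the paper is a citation. Your proposal, by contrast, attempts to sketch a proof of the full hypergraph container theorem, and while the high-level architecture you describe (a deterministic algorithm producing a fingerprint $T(I)\subseteq I$ that alone determines a container $A(I)\supseteq I$, with $|\cC|$ bounded by counting fingerprints of size $O(n\tau\log(1/\ep))$, and with the co-degree parameters $\De_j$ controlling the ``wasted work'') is indeed the correct skeleton of the Balogh--Morris--Samotij/Saxton--Thomason argument, what you have written is a plan rather than a proof. The entire technical content of the theorem is compressed into assertions: you never define the vertex weights, never prove that processing the maximum-weight vertex removes edges (or shrinks the container) at the required rate, and never verify that the hypotheses $\tau<1/(200r!^2r)$ and $\De(\cH,\tau)\le\ep/(12r!)$ suffice, let alone derive the constant $c(r)\le 1000r!^3r$.

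Two specific gaps would have to be filled for this to become a proof. First, the induction on uniformity: for $r\ge 3$, when a fingerprint vertex $v$ is selected one cannot simply ``account for'' the edges through $v$; the actual argument records their links, building an $(r-1)$-uniform (multi)hypergraph to which the construction is applied recursively, and the conditions on $\De_2,\dots,\De_r$ are exactly what makes this recursion work. Your ``telescoping argument over the $r$ levels'' names this step but does not carry it out, and it is the core of the proof, not a routine verification. Second, the source of the $\log(1/\ep)$ factor: a single run of the basic container construction only reduces the edge count (or container size) by a constant factor, and the statement you are proving is obtained in~\cite{ST} by iterating the basic theorem $O(\log(1/\ep))$ times, accumulating fingerprints along the way; your claim that one pass, halted when $e(\cH[A])<\ep e(\cH)$, yields $|T|=O(n\tau\log(1/\ep))$ is exactly the quantitative heart of the matter and is asserted without argument. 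Given that this theorem is a deep external result, the appropriate move in this paper is the citation it actually makes; if you do want to prove it, the missing ingredients above are where the real work lies.
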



Given an integer $k\ge 3$, consider the $k$-uniform hypergraph $\cH_k$ encoding the set of all $k$-APs in $[n]$: $V(\cH_k)=[n]$ and the edge set of $\cH_k$ consists of all k-tuples that form a $k$-AP. It is easy to check that the number of $k$-APs in $[n]$ is $n^2/(2k)<e(\cH_k)<n^2/k$. Note that $\De_1\le k\cdot \frac{n}{k-1}<2n$ and 
\begin{eqnarray}\label{eq-kap-hyp}
d=d(\cH_k)\ge \frac{n}{2}, \quad \De_k=1, \quad \De_i\le\De_2\le {k\choose 2}<k^2  \ \mbox{ for $2\le i\le k-1$}.
\end{eqnarray}

Using the $k$-AP-hypergraph $\cH_k$, we obtain the following adaption of Theorem~\ref{thm-container} to the arithmetic setting.
\begin{cor}\label{cor-container}
Fix an arbitrary integer $k\ge 3$ and let $0<\ep,\tau<1/2$ be such that 
\begin{eqnarray}\label{eq-container}
\tau<1/(200k^{2k})\quad \mbox{ and }  \quad\ep n\tau^{k-1}>k^{3k}.
\end{eqnarray}
Then for sufficiently large $n$, there exists a collection $\cC$ of subsets of $[n]$ such that

(i) every $k$-AP-free subset of $[n]$ is contained in some $F\in\cC$;

(ii) for every $F\in\cC$, the number of $k$-APs in $F$ is at most $\ep n^2$;

(iii) $\log|\cC|\le 1000k^{3k}n\tau\log(1/\ep)\log(1/\tau)$.
\end{cor}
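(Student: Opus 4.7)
The plan is to apply the general hypergraph container theorem (Theorem~\ref{thm-container}) directly to $\cH_k$, the $k$-uniform hypergraph on vertex set $[n]$ whose edges are the $k$-APs in $[n]$. Since a subset of $[n]$ is $k$-AP-free exactly when it is an independent set of $\cH_k$, conclusion (i) of the corollary is immediate from Theorem~\ref{thm-container}(i). Conclusion (ii) reduces to $e(\cH_k[A]) \le \ep\, e(\cH_k) < \ep n^2$ via the crude bound $e(\cH_k) < n^2/k$ noted in the paper. For (iii), the constant $c \le 1000\, r!^3 r = 1000\, k!^3 k$ from Theorem~\ref{thm-container} can be replaced by $1000 k^{3k}$ using the easy estimate $k! \le k^{k-1}$ (valid for $k \ge 3$), which yields $c \le 1000 k^{3k-2} \le 1000 k^{3k}$.

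What is left is to verify the two numerical hypotheses of Theorem~\ref{thm-container}. The bound $\tau < 1/(200\, k!^2 k)$ follows from the assumed $\tau < 1/(200 k^{2k})$ again via $k! \le k^{k-1}$. The real work is the codegree condition $\De(\cH_k,\tau) \le \ep/(12 k!)$. Substituting the data recorded in~\eqref{eq-kap-hyp} ($d \ge n/2$, $\De_k = 1$, and $\De_j \le {k\choose 2}$ for $2 \le j \le k-1$) into
\[
\De(\cH_k,\tau) = 2^{{k\choose 2}-1}\sum_{j=2}^{k} 2^{-{j-1\choose 2}}\frac{\De_j}{d\, \tau^{j-1}},
\]
and using that $\tau$ is tiny so that $\tau^{-(j-1)}$ is largest at $j = k$, the $j = k$ summand dominates and evaluates (using ${k\choose 2}-{k-1\choose 2} = k-1$) to roughly $2^{k-1}/(n\tau^{k-1})$. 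The hypothesis $\ep n\tau^{k-1} > k^{3k}$ then bounds this by $2^{k-1}\ep/k^{3k}$, which is comfortably smaller than $\ep/(12 k!)$ for $k \ge 3$. The summands with $2 \le j \le k-1$ each pick up an extra factor of $\tau^{k-j}$ compared to the $j = k$ term, and the strong bound $\tau < 1/(200 k^{2k})$ makes their total contribution negligible.

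There is no serious obstacle here; the corollary is essentially a black-box repackaging of Theorem~\ref{thm-container}, and all that is required is to correctly track constants. The only minor point of care is that the combinatorial prefactor $2^{{k\choose 2}-1}$ together with the $1/k!$ slack in $\ep/(12 k!)$ must be absorbed, which is precisely why the hypothesis is phrased with $k^{3k}$ (rather than a smaller polynomial in $k$): this choice leaves ample room to swallow both $2^{\binom{k}{2}}$ and $k!$ simultaneously, and also guarantees that the sub-dominant summands $j < k$ cause no trouble after the $\tau^{k-j}$ savings.
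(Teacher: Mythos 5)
Your proposal is correct and follows essentially the same route as the paper: apply Theorem~\ref{thm-container} to the $k$-AP hypergraph $\cH_k$, check $\tau<1/(200k!^2k)$ and $\De(\cH_k,\tau)\le\ep/(12k!)$ from the codegree data in~\eqref{eq-kap-hyp} with the $j=k$ term dominating, and absorb the constant $1000k!^3k$ into $1000k^{3k}$. The only point stated loosely --- that the $j<k$ summands differ from the $j=k$ one ``by a factor $\tau^{k-j}$'' --- in fact also involves factors $2^{\binom{k-1}{2}-\binom{j-1}{2}}\De_j$, but since $\tau<2^{-3k}$ these are easily swallowed, exactly as in the paper's comparison of the $\al_j$'s.
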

\begin{proof}
Consider the $k$-AP hypergraph $\cH_k$. Fix any $0<\ep,\tau<\frac{1}{2}$ such that $\tau<\frac{1}{200k^{2k}}<2^{-3k}$ and $\ep n\tau^{k-1}>k^{3k}$. Define $\alpha_j:=2^{-{j-1\choose 2}}\cdot\tau^{-(j-1)}$ for $2\le j\le k$. Since $\tau<2^{-3k}$, we have that for $2\le j\le k-2$,
\begin{eqnarray}\label{eq-alpha}
\frac{\alpha_j}{\alpha_{j+1}}=\frac{2^{{j\choose 2}}\cdot\tau^{j}}{2^{{j-1\choose 2}}\cdot\tau^{j-1}}=2^{j-1}\tau<2^k\tau<1 \quad\mbox{ and}\quad \frac{k^3\al_{k-1}}{\al_k}=k^32^{k-2}\tau<1.
\end{eqnarray}
Note that for any $k\ge 3$, we have that $\tau<1/(200k^{2k})<1/(200k!^2k)$. We now bound the function $\De(\cH_k,\tau)$ from above as follows:
\begin{eqnarray*}
\De(\cH_k,\tau)&=&2^{{k\choose 2}-1}\sum_{j=2}^k \al_j\frac{\De_j}{d}\overset{\eqref{eq-kap-hyp}}{\le} 2^{{k\choose 2}-1}\left(\sum_{j=2}^{k-1} \al_j\frac{k^2}{d}+\frac{\al_k}{d}\right)\overset{\eqref{eq-alpha}}{\le}2^{{k\choose 2}-1}\left((k-2)\al_{k-1}\frac{k^2}{d}+\frac{\al_{k}}{d}\right)\\
&\overset{\eqref{eq-alpha}}{\le}&2^{{k\choose 2}-1}\cdot \frac{2\al_k}{d}=\frac{2^{k-1}}{d\tau^{k-1}}\overset{\eqref{eq-kap-hyp}}{\le}\frac{2^k}{n\tau^{k-1}}\overset{\eqref{eq-container}}{\le} \frac{\ep}{12k!}.
\end{eqnarray*}

We  now apply Theorem~\ref{thm-container} on $\cH_k$ to obtain $\cC$. Then the conclusions follow from the observation that every independent set in $\cH_k$ is a $k$-AP-free subset of $[n]$.
\end{proof}
\subsection{Supersaturation}\label{sec-supsat}
In this subsection, we present the second main ingredient for the proof of Theorem~\ref{thm-main}: a supersaturation result, Lemma~\ref{lem-supsat}, which states that many $k$-APs start to appear in a set once its size is larger than $r_k(n)$. 

First notice that for any $A\subseteq [n]$ of size $K\cdot r_k(n)$, the following greedy algorithm gives 
\begin{eqnarray}\label{eq-greedy}
\Ga_k(A)\ge (K-1)\cdot r_k(n).
\end{eqnarray}
Set $B:=A$. Repeat the following process $(K-1)\cdot r_k(n)$ times: since $|B|>r_k(n)$, there is a $k$-AP in $B$; update $B$ by removing an arbitrary element in this $k$-AP. We  use a random sparsening trick to improve this simple argument.

\begin{lemma}\label{lem-supsat2}
For every $A\subseteq [n]$ of size $K\cdot r_k(n)$ with $K\ge 2$, we have
$$\Ga_k(A)\ge \left(\frac{K}{2}\right)^k\cdot r_k(n).$$
\end{lemma}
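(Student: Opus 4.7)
The plan is to improve the trivial greedy bound \eqref{eq-greedy} by combining it with a uniform random sampling trick. Set $m:=2r_k(n)$ and let $B\subseteq A$ be a uniformly random subset of size $m$; since $K\ge 2$ we have $|A|\ge m$, so this is well-defined. The greedy argument from \eqref{eq-greedy} applied to $B$ gives deterministically
\[
\Gamma_k(B)\ \ge\ |B|-r_k(n)\ =\ r_k(n),
\]
because after removing one element per discovered $k$-AP we can continue as long as the remaining set has more than $r_k(n)$ elements.

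Next I would compute $\mathbb{E}[\Gamma_k(B)]$ via linearity of expectation. Each fixed $k$-AP $P\subseteq A$ satisfies
\[
\Pr[P\subseteq B]\ =\ \frac{\binom{|A|-k}{m-k}}{\binom{|A|}{m}}\ =\ \prod_{i=0}^{k-1}\frac{m-i}{|A|-i},
\]
so summing over the $\Gamma_k(A)$ many $k$-APs in $A$ yields
\[
r_k(n)\ \le\ \mathbb{E}[\Gamma_k(B)]\ =\ \Gamma_k(A)\cdot\prod_{i=0}^{k-1}\frac{m-i}{|A|-i}.
\]

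To finish, I would observe that since $|A|\ge m$, the elementary inequality $\frac{|A|-i}{m-i}\ge \frac{|A|}{m}$ holds for each $0\le i\le k-1$ (this is the identity $\frac{|A|-i}{m-i}-\frac{|A|}{m}=\frac{i(|A|-m)}{m(m-i)}\ge 0$). Therefore
\[
\prod_{i=0}^{k-1}\frac{|A|-i}{m-i}\ \ge\ \left(\frac{|A|}{m}\right)^k\ =\ \left(\frac{K}{2}\right)^k,
\]
which after rearrangement gives the desired bound $\Gamma_k(A)\ge (K/2)^k\cdot r_k(n)$.

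The argument is short and I do not anticipate a serious obstacle; the only mild technical point is ensuring $m\ge k$ so that the probability expression is non-degenerate, which holds for all $n$ large enough that $r_k(n)\ge k/2$ (and trivially for the regime where the lemma is interesting). The conceptual gain over the naive greedy bound $(K-1)r_k(n)$ comes entirely from the fact that averaging over random $m$-subsets amplifies a linear lower bound on $\Gamma_k(B)$ by the $k$-th power of the density ratio $|A|/m$.
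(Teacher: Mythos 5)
Your proposal is correct and is essentially the paper's own argument: sample a uniform subset of $A$ of size $2r_k(n)$, apply the greedy bound \eqref{eq-greedy} to the sample, and compare with the expected number of surviving $k$-APs, using $\binom{|A|-k}{m-k}/\binom{|A|}{m}\le (m/|A|)^k$. The only cosmetic difference is that you keep the inequality in expectation throughout, while the paper fixes one sample $T$ with $\Ga_k(T)\le \bE[\Ga_k(T)]$; the two are interchangeable.
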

\begin{proof}
Let $T$ be a set chosen uniformly at random among all subsets of $A$ of size $2r_k(n)$. Then the expected number of $k$-APs in $T$ is 
$$\bE[\Ga_k(T)]=\frac{{|A|-k\choose |T|-k}}{{|A|\choose |T|}}\cdot \Ga_k(A)\le \left(\frac{|T|}{|A|}\right)^k\cdot \Ga_k(A)=\frac{\Ga_k(A)}{(K/2)^k}.$$
Thus, there exists a choice of $T$ such that $\Ga_k(T)\le \frac{\Ga_k(A)}{(K/2)^k}$. On the other hand, from~\eqref{eq-greedy}, $\Ga_k(T)\ge r_k(n)$, hence $\Ga_k(A)\ge \left(\frac{K}{2}\right)^k\cdot r_k(n)$ as desired.
\end{proof}

However, the bound given above is still linear in $|A|$, which is not sufficient for our purposes. A superlinear bound is provided in the following lemma, which implies that $\Ga_k(A)\ge |A|\cdot$poly$\log(n)$ for infinitely many values of $n$ (as in Theorem~\ref{thm-supsat}).  A key new idea in our proof is that an averaging argument is carried out only over a set of carefully chosen arithmetic progressions with prime common differences. To obtain a superlinear bound, we will apply the following lemma with roughly $M\sim |A|\cdot \left(\frac{|A|}{n}\right)^{k+1}$, and $|A|\ge r_k(n)$.

\begin{lemma}\label{lem-supsat}
For any $1\le M\le n$ and $A\subseteq [n]$, if ${|A|}/{M}$ is sufficiently large and ${|A|}/{n}\ge 8K\cdot{r_k(M)}/{M}$ with $K\ge 2$, then
$$\Ga_k(A)\ge\frac{|A|^2}{M^2}\cdot\frac{K^k\cdot r_k(M)}{2^{k+4}\log^2 n}.$$
\end{lemma}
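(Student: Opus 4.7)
Let $\mathcal{P}$ be the family of all arithmetic progressions $P_{a,p} := \{a, a+p, \ldots, a+(M-1)p\} \subseteq [n]$ with $p$ prime. Each $P \in \mathcal{P}$ is identified with a copy of $[M]$ via the bijection $a+ip \mapsto i+1$, which preserves $k$-AP structure, so one can apply Lemma~\ref{lem-supsat2} inside any $P$ that meets $A$ heavily. The essential role of restricting to \emph{prime} common differences is to control overcounting: a fixed $k$-AP $Q \subseteq A$ with common difference $d$ is contained in $P_{a,p}$ only when $p \mid d$, so it is counted by at most $M\,\omega(d) \le M \log_2 n$ members of $\mathcal{P}$, where $\omega(d)$ denotes the number of distinct prime divisors of $d$. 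This yields the key overcounting inequality
$$\sum_{P \in \mathcal{P}} \Ga_k(A \cap P) \;\le\; M \log n \cdot \Ga_k(A).$$
Without primes, the overcount would involve the divisor function, losing the clean $\log n$.

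\textbf{Counting and averaging.} Since $|A|/M$, hence $n/M$, is large, Chebyshev's estimate on primes gives $|\mathcal{P}| = \Theta(n^2/(M\log n))$. A direct double-count of pairs $(x,P)$ with $x \in A \cap P$ shows that $\bE_{P \in \mathcal{P}}[|A \cap P|] =: c = \Theta(|A|M/n)$, and the hypothesis $|A|/n \ge 8K r_k(M)/M$ upgrades this to $c \ge \Om(K r_k(M))$. Exploiting the trivial bound $|A \cap P| \le M$, the inequality $c = \bE[|A\cap P|] \le M\cdot \bP[|A\cap P| \ge c/2] + c/2$ forces
$$\bigl|\{P \in \mathcal{P} : |A \cap P| \ge c/2\}\bigr| \;\ge\; \frac{c}{2M}|\mathcal{P}| \;=\; \Om\!\left(\frac{|A|}{n}|\mathcal{P}|\right).$$

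\textbf{Combining.} Each such ``heavy'' $P$ has $|A \cap P| \ge c/2 \ge 4K r_k(M)$, so Lemma~\ref{lem-supsat2} (applied inside $[M]$) yields $\Ga_k(A \cap P) \ge (2K)^k r_k(M) = 2^k K^k r_k(M)$. Summing over heavy $P$'s and matching against the overcounting bound produces
$$\Ga_k(A) \;\ge\; \Om\!\left(\frac{n|A|\,K^k r_k(M)}{M^2 \log^2 n}\right),$$
which, because $|A|\le n$, is at least the claimed $|A|^2 K^k r_k(M)/(2^{k+4} M^2 \log^2 n)$ once the absolute constant is checked. The two $\log n$ factors arise precisely from $|\mathcal{P}|$ and from the prime overcount, together giving the $\log^2 n$ in the denominator.

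\textbf{Main obstacle.} The central insight is the use of primes: it simultaneously delivers a tight $O(\log n)$ overcount (via $\omega(d) \le \log n$) and costs only a single $\log n$ factor in $|\mathcal{P}|$ (via PNT/Chebyshev), balancing to exactly the $\log^2 n$ we want. The other subtlety is the Markov-style averaging, which must convert the expected value $c$ into an $\Om(|A|/n)$ fraction of heavy sub-APs, and this is the only place where the full strength of the hypothesis $|A|/n \ge 8Kr_k(M)/M$ is actually needed.
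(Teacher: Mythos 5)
Your overall strategy is the paper's: average over $M$-term progressions with prime common difference, use the divisor bound to show each $k$-AP is overcounted at most $M\log n$ times, and apply Lemma~\ref{lem-supsat2} inside the progressions that meet $A$ heavily. But there is a genuine gap in your averaging step. You take $\mathcal{P}$ to be \emph{all} prime-difference $M$-APs in $[n]$ (so differences $p$ up to about $n/M$) and assert that a direct double count gives $\bE_{P}[|A\cap P|]=\Theta(|A|M/n)$. The upper bound is fine, but the lower bound is false in general: the number of members of $\mathcal{P}$ containing a fixed $x$ is not uniform in $x$ — an element at distance $y$ from an endpoint of $[n]$ lies in only $O\bigl(y/\log(n/M)\bigr)$ progressions of the family, versus $\Theta\bigl(n/\log(n/M)\bigr)$ for elements in the middle. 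So if $A$ sits inside a short initial segment, say $A\subseteq[1,2|A|]$ with $|A|=o(n)$ (exactly the relevant regime, since in the application $|A|=\Theta(r_k(n))$), then $\bE_P[|A\cap P|]$ is smaller than $|A|M/n$ by roughly a factor $|A|/n$, and in particular can fall far below $8K\,r_k(M)$. Then your ``heavy'' threshold $c/2$ no longer guarantees $|A\cap P|\ge 4K\,r_k(M)$, Lemma~\ref{lem-supsat2} gives nothing, and the whole chain collapses. The hypothesis $|A|/n\ge 8K\,r_k(M)/M$ does not rescue this, because it controls $|A|M/n$, not the true mean over your family.

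The paper avoids this precisely by \emph{truncating} the common differences: it only uses primes $d\le x:=|A|/(4M)$. For such $d$ the interval $I_d=[(M-1)d+1,\,n-(M-1)d]$, on which every element is covered exactly $M$ times by difference-$d$ progressions, misses at most $2(M-1)d\le |A|/2$ elements, so it captures at least half of $A$ \emph{no matter where $A$ is located}; a double count against the fixed threshold $K\,r_k(M)$ (rather than against the mean, which also sidesteps your constant-factor slack in applying Lemma~\ref{lem-supsat2}) then yields at least $|A|^2/(16M\log n)$ heavy progressions. Note also that your final bound $\Om\bigl(n|A|\,K^k r_k(M)/(M^2\log^2 n)\bigr)$, stronger than the lemma by a factor $n/|A|$, is an artifact of the same error: it comes from multiplying the inflated family size $|\mathcal{P}|\approx n^2/(M\log n)$ by the unjustified heavy fraction $\Om(|A|/n)$. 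With the truncated family the heavy count is $\Theta(|A|^2/(M\log n))$, which reproduces the $|A|^2$ in the statement. To repair your write-up, restrict to prime differences $d\le |A|/(4M)$ and run the double count as above; the rest of your argument (the $M\log n$ overcount and the use of Lemma~\ref{lem-supsat2}) is correct.
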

\begin{proof}
Define $x=|A|/(4M)$, and assume that it is sufficiently large. Then the Prime Number Theorem (see e.g.~\cite{PNT}) implies that the number of prime numbers less than $x$ is at least $x/\log x$ and at most $2x/\log x$. Denote $\cB_d$  the set of $M$-term arithmetic progressions with common difference $d$ in $[n]$ and set
$$\cB:=\bigcup_{\substack{d \mbox{ is prime}\\
                                           d\le x}}
\cB_d,$$
that is, $\cB$ consists of all $M$-APs whose common difference is a prime number not larger than $x$. We notice first that any $k$-AP can occur in at most $M\log n$ many members of $\cB$. Indeed, fix an arbitrary $k$-AP, say $Q'$, with common difference $d'$. Note that every $M$-AP $Q$ containing $Q'$ can be constructed in two steps: 

(i) choose $1\le i\le M$ and set the $i$-th term of $Q$ to $\min(Q')$; 

(ii) choose the common difference $d$ for $Q$.\\
There are clearly at most $M$ choices for (i). As for (ii), in order to have $Q'\subseteq Q$, we need $d|d'$. Since $Q\in \cB$, $d$ must be a prime divisor of $d'$. Using that the number of prime divisors of $d'$ is at most $\log d'\le \log n$, the number of such choices is at most $\log n$. As a consequence, we have that 
\begin{eqnarray}\label{eq-oc}
\Ga_k(A)\ge \frac{1}{M\log n}\sum_{B\in\cB}\Ga_k(A\cap B).
\end{eqnarray}

Let $\cG\subseteq \cB$ consists of all $B\in\cB$ such that $|A\cap B|\ge K\cdot r_k(M)$. Then by Lemma~\ref{lem-supsat2}, we have $\Ga_k(A\cap B)\ge (K/2)^k\cdot r_k(M)$ for every $B\in\cG$. Together with~\eqref{eq-oc}, this gives that
\begin{eqnarray}\label{eq-needg}
\Ga_k(A)\ge \frac{1}{M\log n}\sum_{B\in\cG}\Ga_k(A\cap B)\ge |\cG|\cdot\frac{K^k\cdot r_k(M)}{2^kM\log n}.
\end{eqnarray}
Our next goal is to give a lower bound on $|\cG|$, to achieve this, we will do a double-counting on $\sum_{B\in\cB}|A\cap B|$. 

For each $d\le x$, define $I_d:=[(M-1)d+1, n-(M-1)d]$. Then every $z\in I_d$ appears in exactly $M$ members of $\cB_d$. Since $x=|A|/(4M)$, 
$$|A\cap I_d|=|A|-2(M-1)d\ge |A|-2Mx\ge \frac{|A|}{2}.$$ 
As an immediate consequence of the Prime Number Theorem, the number of primes less than $x$, which is the number of choices for $d$, is at least $x/\log x$ and at most $2x/\log x$ for sufficiently large $x$. Therefore,
\begin{eqnarray}\label{eq-int-upp}
\sum_{B\in\cB}|A\cap B|=\sum_{\substack{d \mbox{ is prime}\\d\le x}}\sum_{B\in \cB_d}|A\cap B|\ge M\sum_{\substack{d \mbox{ is prime}\\d\le x}}|A\cap I_d|\ge M\cdot \frac{x}{\log x}\cdot \frac{|A|}{2}.
\end{eqnarray}
On the other hand, since $|\cB_d|<n$, for each $d$ we have $|\cB|\le \frac{2x}{\log x}\cdot n$, hence
\begin{eqnarray}\label{eq-int-low}
\sum_{B\in\cB}|A\cap B|\le M|\cG|+K\cdot r_k(M)\cdot |\cB\setminus \cG|\le M|\cG|+K\cdot r_k(M)\cdot \frac{2xn}{\log x}.
\end{eqnarray}
Combining~\eqref{eq-int-upp} and~\eqref{eq-int-low}, we get
\begin{eqnarray*}
|\cG|&\ge& \frac{x}{\log x}\cdot \frac{|A|}{2}-K\cdot \frac{r_k(M)}{M}\cdot \frac{2xn}{\log x}=\frac{x}{\log x}\left(\frac{|A|}{2}-2K\cdot \frac{r_k(M)}{M}\cdot n\right)\\
&\ge&\frac{x}{\log n}\cdot \frac{|A|}{4}= \frac{|A|^2}{16M\log n},
\end{eqnarray*}
where the last inequality follows from $\frac{|A|}{n}\ge 8K\cdot\frac{r_k(M)}{M}$. Thus, by~\eqref{eq-needg}, we have
$$\Ga_k(A)\ge \frac{|A|^2}{16M\log n}\cdot \frac{K^k\cdot r_k(M)}{2^kM\log n}=\frac{|A|^2}{M^2}\cdot \frac{K^k\cdot r_k(M)}{2^{k+4}\log^2 n}.$$
\end{proof}

\section{Proof of Theorem~\ref{thm-main}}\label{sec-main}
Throughout this section, we fix $k$ a positive integer and write $r(n)$ instead of $r_k(n)$ and define $\ f(n)=r(n)/n$. We will use the following functions:
\begin{eqnarray}\label{eq-para-setup}
M(n)=\frac{n}{\log^{3k}n}\left(\frac{r(n)}{n}\right)^{k+2},\ \ep(n)=\frac{\log^{3k-2}n}{n}\left(\frac{n}{r(n)}\right)^{k-1}, \ \tau(n)=\frac{r(n)}{n}\frac{1}{\log^3n}.
\end{eqnarray} 
We first observe a simple fact about the function $r(n)$. Since the property of having no $k$-AP is invariant under translation, for any given $m<n$, if we divide $[n]$ into consecutive intervals of length $m$, then any given $k$-AP-free subset of $[n]$ contains at most $r(m)$ elements from each interval. Thus,
\begin{eqnarray}\label{eq-f-dec}
r(n)\le \left\lceil \frac{n}{m}\right\rceil\cdot  r(m).
\end{eqnarray} 
Since $\frac{1}{n}\cdot \left\lceil \frac{n}{m}\right\rceil<\frac{2}{m}$ for any $m<n$, dividing by $n$ on both sides of~\eqref{eq-f-dec} yields:
\begin{fact}\label{fact-dec}
For every $m<n$, $f(n)<2f(m)$.
\end{fact}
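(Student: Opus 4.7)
The claimed inequality is an immediate consequence of the displayed bound $r(n) \le \lceil n/m \rceil \cdot r(m)$ in equation (3.2), combined with the elementary estimate $\frac{1}{n}\lceil n/m \rceil < \frac{2}{m}$ already noted in the paragraph just before the statement. So the plan is simply to chain these two together.

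Concretely, I would start from (3.2), divide both sides by $n$, and substitute $f(n) = r(n)/n$ and $f(m) = r(m)/m$. This gives
\[
f(n) \;=\; \frac{r(n)}{n} \;\le\; \frac{\lceil n/m \rceil}{n}\cdot r(m) \;<\; \frac{2}{m}\cdot r(m) \;=\; 2 f(m),
\]
which is the desired bound. The only thing to verify carefully is the middle inequality: when $m<n$ we have $\lceil n/m\rceil \le n/m + 1$, so $\lceil n/m \rceil \cdot m \le n + m < 2n$, i.e.\ $\lceil n/m\rceil / n < 2/m$.

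There is no real obstacle here; the fact is a short arithmetic consequence of the translation-invariance of the $k$-AP-free property. The only mild subtlety worth flagging is the strictness of the inequality $f(n) < 2f(m)$ (as opposed to $\le$), which is exactly what the strict bound $m < n$ buys in the ceiling estimate above.
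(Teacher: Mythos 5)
Your proof is correct and is essentially the paper's own argument: the paper derives Fact~\ref{fact-dec} by dividing~\eqref{eq-f-dec} by $n$ and using $\frac{1}{n}\lceil n/m\rceil<\frac{2}{m}$ for $m<n$, exactly as you do. Your ceiling estimate $\lceil n/m\rceil\cdot m\le n+m<2n$ is a fine justification of that step, so nothing is missing.
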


For the proof of Theorem~\ref{thm-supsat}, we will apply Lemma~\ref{lem-supsat} with $M$ defined as in~\eqref{eq-para-setup}. To do so, it requires that the function $r(n)$ is ``smooth''. This is proved in the following lemma. 

\begin{lemma}\label{lem-smooth2}
Given $k\ge 3$, there exists $C:=C(k)>4$ and an infinite sequence $\{n_i\}_{i=1}^{\infty}$, such that 
$$C\frac{r(n_i)}{n_i}\ge\frac{r(M(n_i))}{M(n_i)}$$
for all $i\ge 1$, where $M(n)$ is defined as in~\eqref{eq-para-setup}.
\end{lemma}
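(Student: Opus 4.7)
The plan is to argue by contradiction. Suppose the lemma fails; then there exists $N_0$ such that $f(M(n)) > C f(n)$ for every $n > N_0$, where I write $f(n) := r(n)/n$ and where $C=C(k)$ will be chosen sufficiently large (depending on the Rankin/Behrend-type constant $c_k$ of~\eqref{eq-behrend}). Fix a large $n_0 > N_0$ and define the chain $n_{i+1} := M(n_i)$. As long as $n_0,\ldots,n_i$ all exceed $N_0$, the standing assumption yields $f(n_i) > C^i f(n_0)$ by induction. Since $f\le 1$ trivially, this forces
$$i \le T := \bigl\lceil \log(1/f(n_0))/\log C\bigr\rceil,$$
and by the lower bound in~\eqref{eq-behrend} we have $f(n_0)\ge 2^{-c_k(\log n_0)^{1/(k-1)}}$, whence $T \le c_k(\log n_0)^{1/(k-1)}/\log C + 1$.

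The heart of the argument is to show that, for $n_0$ large enough in terms of $N_0$, one still has $n_T > N_0$; combined with the bound $f(n_T) > C^T f(n_0) > 1$, this gives the desired contradiction. From the definition of $M$ in~\eqref{eq-para-setup} together with~\eqref{eq-behrend},
$$\log n_i - \log n_{i+1} = (k+2)\log(1/f(n_i)) + 3k\log\log n_i \le (k+2)c_k(\log n_i)^{1/(k-1)} + 3k\log\log n_i.$$
Summing this over $i=0,\ldots,T-1$ and using $\log n_i \le \log n_0$ gives
$$\log n_0 - \log n_T \le T\cdot\bigl((k+2)c_k(\log n_0)^{1/(k-1)} + 3k\log\log n_0\bigr) = O\!\left(\frac{(\log n_0)^{2/(k-1)}}{\log C}\right).$$

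For $k\ge 4$ the exponent $2/(k-1)\le 2/3<1$, so the right-hand side is $o(\log n_0)$, and any fixed $C>4$ works provided $n_0 \ge N_0^{2}$, say. For $k=3$ the exponent equals $1$, so the drop scales linearly in $\log n_0$; in this case I would fix $C = C(3)$ large enough (as an explicit function of $c_3$) so that the implicit constant in the $O$-term is at most $1/2$. Then $\log n_T \ge \tfrac12\log n_0 > \log N_0$ once $n_0 \ge N_0^{2}$, again forcing the contradiction.

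The main obstacle is precisely the $k=3$ case: the parameters of $M(n)$ and the Behrend-type lower bound are balanced so tightly that the per-iteration loss in $\log n$ is of the same order as $T$ times $(\log n_0)^{1/(k-1)}$, which for $k=3$ is linear in $\log n_0$; only an explicit, sufficiently large choice of $C$ in terms of $c_3$ closes the loop. Once this choice is made, the contradiction obtained for arbitrary $N_0$ shows that the set of $n$ with $Cf(n) \ge f(M(n))$ is unbounded, yielding the required infinite sequence $\{n_i\}$.
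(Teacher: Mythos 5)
Your argument is correct, and while it rests on the same basic mechanism as the paper's proof --- iterate a shrinking map and play the geometric growth of $f$ along the chain against the slow decay of $n$ --- the bookkeeping is genuinely different. The paper iterates the explicit minorant $n\mapsto n\,g(n)$ with $g(n)=2^{-(5k+11)\sqrt{\log n}}$, using Fact~\ref{fact-dec} to pass between $M(n)$ and $n\,g(n)$ at the cost of replacing $C$ by $C/4$, runs a prescribed number $t\approx\tfrac12\sqrt{\log n}/(5k+11)$ of steps so that the endpoint stays above $\sqrt n$, and gets its contradiction from the Behrend bound $f(n)>2^{-5\sqrt{\log n}}$; this forces $C$ to be exponentially large in $k$ for every $k\ge 3$. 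You instead iterate $M$ itself, let the trivial bound $f\le 1$ cap the number of steps at $T\lesssim c_k(\log n_0)^{1/(k-1)}/\log C$ via the Rankin/Behrend lower bound, and then sum the exact per-step identity $\log n_i-\log n_{i+1}=(k+2)\log(1/f(n_i))+3k\log\log n_i$ to see that the chain survives down to $\sqrt{n_0}>N_0$, whence $f(n_T)>1$, absurd. This buys a sharper conclusion for $k\ge4$ (any fixed $C>4$ works, with no Fact~\ref{fact-dec} losses), whereas for $k=3$ you, like the paper, must take $C$ large in terms of the Behrend constant; note that for $k=3$ the lower bound you invoke from~\eqref{eq-behrend} is really Behrend's bound with exponent $1/2=1/(k-1)$, since the displayed inequality there is stated only for $k\ge4$. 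Two small points to tidy in a full write-up: your $O\bigl((\log n_0)^{2/(k-1)}/\log C\bigr)$ hides the terms coming from the ``$+1$'' in $T$ and from $3k\log\log n_0$, which are not divided by $\log C$ but are $o(\log n_0)$ and harmless; and the summation over $i=0,\dots,T-1$ presupposes that the standing assumption and Rankin's bound apply at every $n_i$, which is part of what is being established --- since the chain is decreasing this is repaired by taking a minimal $j\le T$ with $n_j$ below $\max(N_0,n_{\min})$ and noting the same summation up to $j$ forces $n_j\ge\sqrt{n_0}$, a contradiction. These are routine details, not gaps.
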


\begin{proof}
	Fix $C=C(k)>4$ a sufficiently large constant. From Behrend's construction, we know that $f(n)>2^{-5\sqrt{\log n}}$. We need to show that, for infinitely many $n$, $Cf(n)\ge f(M(n))=f\left(\frac{n}{\log^{3k}n}f(n)^{k+2}\right)$. Suppose to the contrary, that for all but finitely many $n$, $f(n)\le C^{-1}f(M(n))$. Let $n_0$ be the largest integer such that $f(n)> C^{-1}f(M(n))$. 
	
	Define a decreasing function $g(x)=2^{-(5k+11)\sqrt{\log x}}$ for $x\ge 1$. Note that for sufficiently large $n$, since $f(n)>2^{-5\sqrt{\log n}}$,
	$$M(n)=\frac{n}{\log^{3k}n}f(n)^{k+2}>\frac{n}{\log^{3k}n}\cdot 2^{-5(k+2)\sqrt{\log n}}>n\cdot 2^{-(5k+11)\sqrt{\log n}}=n\cdot g(n).$$ Then by Fact~\ref{fact-dec}, we have $f(M(n))<2f(n\cdot g(n))$. Therefore, by the definition of $n_0$, we have that for any $n>n_0$,
	\begin{eqnarray}\label{eq-base}
	f(n)\le C^{-1}f(M(n))<\left(\frac{C}{2}\right)^{-1}f(n\cdot g(n)).
	\end{eqnarray}
	
	Fix an integer $n>n_0^2$ and set  $t=\lfloor\frac{1}{2}\frac{\sqrt{\log n}}{5k+11}\rfloor$. We will show by induction that for every $1\le j\le t$, 
	\begin{eqnarray}\label{eq-iterate}
	f(n)<\left(\frac{C}{4}\right)^{-j}f(n\cdot g(n)^j).
	\end{eqnarray}
	The base case, $j=1$,  is given by~\eqref{eq-base}. Suppose~\eqref{eq-iterate} holds for some $1\le j<t$. Define $n':=n\cdot g(n)^j$. Then $$n'>n\cdot g(n)^t=n\cdot 2^{-(5k+11)\sqrt{\log n}\cdot \lfloor\frac{1}{2}\frac{\sqrt{\log n}}{5k+11}\rfloor}\ge n\cdot 2^{-\frac{1}{2}\log n}=\sqrt{n}>n_0.$$
	So by~\eqref{eq-base}, $f(n')<\left(\frac{C}{2}\right)^{-1}f(n'\cdot g(n'))$. Since $n'<n$ and $g(x)$ is decreasing, $n'\cdot g(n')>n'\cdot g(n)$. Then by Fact~\ref{fact-dec}, $f(n'\cdot g(n'))<2f(n'\cdot g(n))$. Hence, $f(n')<\left(\frac{C}{4}\right)^{-1}f(n'\cdot g(n))$. Thus by the induction hypothesis
	\begin{eqnarray*}
		f(n)&<& \left(\frac{C}{4}\right)^{-j}f(n\cdot g(n)^j)=\left(\frac{C}{4}\right)^{-j}f(n')\\
		&<&\left(\frac{C}{4}\right)^{-j}\left(\frac{C}{4}\right)^{-1}f(n'\cdot g(n))=\left(\frac{C}{4}\right)^{-(j+1)}f(n\cdot g(n)^{j+1}).
	\end{eqnarray*}
This proves~\eqref{eq-iterate} for $j=t$  and note that $f(n)\le 1$ and that $f(n\cdot g(n)^t)<2f(\sqrt{n})$ by Fact~\ref{fact-dec}, hence
	$$f(n)<\left(\frac{C}{4}\right)^{-t}f(n\cdot g(n)^t)<\left(\frac{C}{4}\right)^{-t}\cdot 2f(\sqrt{n})\le 2\left(\frac{C}{4}\right)^{-t}=2\left(\frac{C}{4}\right)^{-\lfloor\frac{1}{2}\frac{\sqrt{\log n}}{5k+11}\rfloor}<2^{-5\sqrt{\log n}}$$
	for $C$ sufficiently large, a contradiction.
\end{proof}

Theorem~\ref{thm-supsat} follows immediately from Lemmas~\ref{lem-supsat} and~\ref{lem-smooth2}.
\begin{proof}[Proof of Theorem~\ref{thm-supsat}]
Let $K$ be the constant from Lemma~\ref{lem-supsat}. Let $C$ be the constant and $\{n_i\}_{i=1}^{\infty}$ be the sequence from Lemma~\ref{lem-smooth2}. Define $C'=8CK$. Fix an arbitrary $n\in\{n_i\}_{i=1}^{\infty}$ and write $M=M(n)$ as defined in~\eqref{eq-para-setup}. Let $A\subseteq [n]$ be an arbitrary set of size $C'r(n)$. Then by Lemma~\ref{lem-smooth2}, 
$$\frac{|A|}{n}=\frac{8CK\cdot r(n)}{n}\ge 8K\frac{r(M(n))}{M(n)}.$$ 
By Fact~\ref{fact-dec}, $\frac{2r(M)}{M}>\frac{r(n)}{n}$. Thus by Lemma~\ref{lem-supsat} and that $K\ge 2$, $C>4$, we have 
\begin{eqnarray*}
\Ga_k(A)&>&\frac{|A|^2}{M^2}\cdot\frac{K^k\cdot r(M)}{2^{k+4}\log^2 n}=\frac{(8CK)^2r(n)^2}{M^2}\cdot\frac{K^k\cdot r(M)}{2^{k+4}\log^2 n}=\frac{r(n)^2}{M\log^2 n}\cdot \frac{2r(M)}{M}\cdot 
\frac{(8CK)^2K^k}{2^{k+5}}\\
&>&\frac{r(n)^2}{M\log^2 n}\cdot \frac{2r(M)}{M}>\frac{r(n)^2}{M\log^2 n}\cdot \frac{r(n)}{n}=\log^{3k-2}n\left(\frac{n}{r(n)}\right)^{k-1}n.
\end{eqnarray*}
\end{proof}

\begin{proof}[Proof of Theorem~\ref{thm-main}]
Let $\{n_i\}_{i=1}^{\infty}$ be the infinite sequence guaranteed by Lemma~\ref{lem-smooth2}. We will show that the conclusion holds for this sequence of values of $n$. Let $M=M(n),\ep=\ep(n)$ and $\tau=\tau(n)$ be as defined in~\eqref{eq-para-setup}. For sufficiently large $n$, we have that $\tau<\frac{1}{200k^{2k}}$ and
$$\ep n\tau^{k-1}=\frac{\log^{3k-2}n}{n}\left(\frac{n}{r(n)}\right)^{k-1}\cdot n\cdot \left(\frac{r(n)}{n}\frac{1}{\log^3n}\right)^{k-1}=\log n>k^{3k}.$$
Thus by Corollary~\ref{cor-container}, there is a family $\cC$ of containers such that every $k$-AP-free subset of $[n]$ is a subset of some container in $\cC$. By~\eqref{eq-para-setup}, $\log\frac{1}{\ep}\log\frac{1}{\tau}<\log^2n$, thus 
$$\log|\cC|\le 1000k^{3k}n\tau\log\frac{1}{\ep}\log\frac{1}{\tau}< 1000k^{3k}n\cdot\frac{r(n)}{n}\frac{1}{\log^3n}\cdot\log^2n=o(r(n)).$$

Since for every container $A\in\cC$, the number of $k$-APs in $A$ is at most $\ep n^2$, then by Theorem~\ref{thm-supsat}, $|A|< C'r(n)$ for every $A\in\cC$. Recall that every $k$-AP-free subset is contained in some member of $\cC$. Hence, the number of $k$-AP-free subsets of $[n]$ is at most
$$\sum_{A\in\cC}2^{|A|}\le |\cC|\cdot \max_{A\in\cC}2^{|A|}<2^{o(r(n))}\cdot 2^{C'r(n)}=2^{O(r(n))}.$$
\end{proof}

\begin{proof}[Proof of Corollary~\ref{cor-dense}]
Let $\{n_i\}_{i=1}^{\infty}$ be a sequence of integers for which the conclusion of Theorem~\ref{thm-main} holds. Fix an arbitrary $\ep>0$ and $n_i$. From Theorem~\ref{thm-main}, we know that the number of $k$-AP-free subsets of $[n_i]$ is at most $2^{c\cdot r(n_i)}$ for some absolute constant $c>0$. For any $\ep n_i\le m<n_i$, by~\eqref{eq-f-dec}, we have that $r(n_i)\le\left\lceil \frac{1}{\ep}\right\rceil\cdot  r(m)<\frac{2}{\ep}\cdot r(m)$. Therefore, by setting $b=2c/\ep$, we have that the number of $k$-AP-free subsets of $[m]$ is at most $2^{c\cdot r(n_i)}\le 2^{b\cdot r(m)}$. It then follows that $m\in A(b)$ for any $\ep n_i\le m<n_i$ and that $|A(b)\cap [n_i]|/n_i\ge1-\ep$ as desired. 
\end{proof}

The proof of Theorem~\ref{prop-cond} is along the same line as of the proof of Theorem~\ref{thm-main}, hence we provide here only a sketch of it. The difference is that to prove this weaker bound, we only need supersaturation results for sets of size $n/$poly$\log n$. For sets of this size, we do not need the technical condition in Lemma~\ref{lem-smooth2} and we can invoke Lemma~\ref{lem-supsat} with $M=n^{o(1)}$ for all values of $n$ instead of $M=n^{1-o(1)}$ as in proof of Theorem~\ref{thm-main}.
\begin{proof}[Proof of Theorem~\ref{prop-cond}]
Fix an arbitrary $0<\ga<1$. We apply Corollary~\ref{cor-container} with $\ep=n^{-\ga/2}$, $\tau=n^{-\frac{1}{k-1}+\ga/2}$ and let $\cC$ be the family of containers of size $\log|\cC|=o(n^{1-\frac{1}{k-1}+\ga})$. Each container contains at most $\ep n^2=n^{2-\ga/2}$ many $k$-APs. It follows that for every $A\in\cC$, $|A|\le\frac{C'n}{h(n)}$ for some $C'=C'(k,c,\ga)$, since otherwise applying Lemma~\ref{lem-supsat} on $A$ with $M=n^{\ga/4}$ would imply $\Ga_k(A)>n^{2-\ga/3}>\ep n^2$, a contradiction. Thus, the number of $k$-AP-free subsets of $[n]$ is at most $|\cC|\cdot 2^{C'n/h(n)}=2^{2C'n/h(n)}$, as desired. Similarly, the number of $k$-AP-free $m$-subsets of $[n]$ is at most $|\cC|\cdot {C'n/h(n)\choose m}\le 2^m\cdot {C'n/h(n)\choose m}\le {2C'n/h(n)\choose m}$, where the first inequality follows from $m\ge n^{1-\frac{1}{k-1}+\ga}\ge \log|\cC|$.
\end{proof}

\begin{proof}[Proof of Theorem~\ref{thm-prime-density}]
Fix an arbitrary $\de>0$ and an integer $k\ge 3$. Let $S\subseteq \pri$ be a random subset of $\pri$, in which each element is chosen with probability $p=n^{-1/k}$ independently of others. A standard application of Chernoff bound implies that $|S|\ge p|\pri|/2$ with probability $1-o(1)$.

Set $\beta=\de/50, \ga=1/(10k^2)$ and $m=\de p|\pri|/5$. Then by the Prime Number Theorem, we have
$$m=\Omega\left(\frac{n^{1-1/k}}{\log n}\right)>n^{1-\frac{1}{k-1}+\ga}.$$
Thus by Theorem~\ref{thm-prime-counting} the number of $k$-AP-free $m$-subsets in $\pri$ is at most ${\beta\pri\choose m}$.

Let $X$ be the number of $k$-AP-free $m$-subsets in $S$, and $Y$ be the number of $(k+1)$-APs in $S$. 
Then,
$$\bE[X]\le {\beta|\pri|\choose m}p^m\le \left(\frac{e\cdot \beta|\pri|}{m}\cdot p\right)^m=\left(\frac{e\cdot(\de/50)|\pri|\cdot p}{\de p|\pri|/5}\right)^m=\left(\frac{e}{10}\right)^m=o(1).$$
Thus by Markov's inequality, $X=0$ with probability at least $2/3$.

By the Green-Tao theorem, the number of $(k+1)$-APs in $\pri$ is $\Theta\left(\frac{n^2}{(\log n)^{k+1}}\right)$. Thus,
$$\bE[Y]\le \frac{n^2}{\log^kn}\cdot p^{k+1}=\frac{n^{1-1/k}}{\log^k n}.$$
We have, by Markov's inequality, that $Y\le \frac{3n^{1-1/k}}{\log^kn}$ with probability at least $2/3$. Therefore, with positive probability, there is a choice of $S$ such that $|S|\ge p|\pri|/2$, $X=0$ and $Y\le \frac{3n^{1-1/k}}{\log^kn}$. Let $S'$ be the set obtained from $S$ by deleting one element from every $(k+1)$-AP in $S$. Then 
$$|S'|=|S|-Y\ge \frac{p|\pri|}{2}-\frac{3n^{1-1/k}}{\log^kn}\ge \frac{n^{1-1/k}}{3\log n}-\frac{3n^{1-1/k}}{\log^kn}\ge \frac{n^{1-1/k}}{4\log n}.$$
We claim that $S'$ has the desired property. Indeed, clearly $S'$ is $(k+1)$-AP-free. Suppose $S'$ is not $(\de, k)$-Szemer\'edi, then there exists a $k$-AP-free subset $U\subseteq S'$ of size 
$$|U|\ge\de|S'|\ge \de\cdot \frac{n^{1-1/k}}{4\log n}> \frac{\de p|\pri|}{5}=m.$$
However, this contradicts that $X=0$.
\end{proof}

\section*{Acknowledgement}
We would like to thank J\'ozsef Solymosi who suggested considering the set of primes instead of integers in Theorem~\ref{thm-prime-density}. We also thank the anonymous referee for the helpful comments which greatly improved the presentation of this paper.

\section{Appendix}
The idea behind the container method is to have a small certificate set to describe a large independent set. We give two further applications using this idea.

\subsection{A variation of van der Waerden's theorem}\label{sec-1-3}
Van der Waerden's theorem~\cite{vdW}, a classical result in Ramsey theory, says that the set of integers is rich in arithmetic progressions: for any positive integers $k$ and $r$, there exists $n_0>0$ such that every $r$-coloring of $[n]$ with $n>n_0$ yields a monochromatic $k$-term arithmetic progression. Denote by $W(k;r)$ the $r$-colored van der Waerden number, i.e.,~the minimum integer $n$ such that every $r$-coloring of $[n]$ contains a monochromatic $k$-AP. The best known upper bound on $W(k;r)$ is due to   Gowers~\cite{Gow2}: $W(k;r)\le 2^{2^{r^{2^{2^{k+9}}}}}$. For two colors, the best lower bound is due to Berlekamp~\cite{Ber}: $W(p+1;2)\ge p2^{p}$, where $p$ is a prime number.

By setting $\de=1/r$ in Theorem~\ref{thm-prime-density}, we immediately obtain the following extension of results of Spencer~\cite{Sp}, Ne\v{s}et\v{r}il and R\"odl~\cite{NR-vdw} in primes on
restricted van der Waerden's theorem.

\begin{cor}\label{thm-prime}
	For any $r\ge 2$ and $k\ge 3$, there exists a set of primes $S\subseteq \pri$ such that $S$ is $(k+1)$-AP-free and any $r$-coloring of $S$ yields a monochromatic $k$-AP.
\end{cor}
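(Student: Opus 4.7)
My plan is to deduce the corollary directly from Theorem~\ref{thm-prime-density} via a one-line pigeonhole argument; the real content is already packaged inside Theorem~\ref{thm-prime-density}, and this corollary merely translates the density statement into a partition statement.

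The approach is as follows. Fix integers $r\ge 2$ and $k\ge 3$, and set $\de=1/r$. I would first invoke Theorem~\ref{thm-prime-density} with this choice of $\de$ and with $n$ taken sufficiently large (the conclusion holds for all large enough $n$), obtaining a set of primes $S\subseteq \pri$ of size $n^{1-1/k-o(1)}$ which is simultaneously $(k+1)$-AP-free and $(\de,k)$-Szemer\'edi. The set $S$ itself is the only object I need.

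Next, I would verify the Ramsey-type property. Given any $r$-coloring $c:S\to[r]$, the pigeonhole principle guarantees a color class $S_i:=c^{-1}(i)$ with $|S_i|\ge |S|/r = \de|S|$. The $(\de,k)$-Szemer\'edi property then forces $S_i$ to contain a $k$-term arithmetic progression, which is by construction monochromatic in color $i$. Combined with the fact that $S$ contains no $(k+1)$-AP, this yields exactly the conclusion of the corollary.

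There is essentially no obstacle here beyond correctly quoting Theorem~\ref{thm-prime-density}: the density-to-coloring reduction is the standard one, and $\de=1/r$ is a constant depending only on $r$, so the quantitative hypotheses of Theorem~\ref{thm-prime-density} are trivially satisfied. The only thing to note is that this proof actually gives more than the corollary states, since one obtains a set $S$ of polynomial size $n^{1-1/k-o(1)}$ in $\pri$, which sharpens the qualitative existence statements of Spencer~\cite{Sp} and Ne\v{s}et\v{r}il--R\"odl~\cite{NR-vdw} both by restricting to primes and by providing density information.
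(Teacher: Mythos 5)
Your proof is correct, and it is exactly the derivation the paper itself uses to justify calling this statement a corollary: immediately before stating it, the authors write that setting $\de=1/r$ in Theorem~\ref{thm-prime-density} yields the result, and your pigeonhole step (some color class has size at least $|S|/r=\de|S|$, hence contains a $k$-AP by the $(\de,k)$-Szemer\'edi property, while $S$ itself is $(k+1)$-AP-free) is precisely that reduction, carried out correctly.

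It is worth noting, though, that the proof the paper actually writes out for this corollary is deliberately a \emph{different} argument: an alternative, self-contained proof by the container method that does not invoke Theorem~\ref{thm-prime-density}. There the authors take a $p$-random subset $S$ of $\pri$ with $p=n^{-1/k}(\log n)^{-W}$, apply Corollary~\ref{cor-container} to get containers for $k$-AP-free sets, prove a coloring supersaturation statement (Lemma~\ref{lem-color-prime}) saying that if all $r$ color classes of a coloring of $\pri$ have few $k$-APs then the leftover set is dense, and then show via a union bound over the (few) possible leftover sets $T$, together with an FKG estimate for the event that $S$ is $(k+1)$-AP-free, that with positive probability $S$ is $(k+1)$-AP-free and admits no $k$-AP-free $r$-coloring. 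The trade-off is clear: your route is a one-line translation of density into partition regularity and inherits the quantitative size $n^{1-1/k-o(1)}$ from Theorem~\ref{thm-prime-density}, while the paper's written proof re-runs the probabilistic container machinery at the level of colorings, which is what generalizes to the quantitative Folkman-type bound of Proposition~\ref{thm-folkman} over the integers. Both are valid proofs of the corollary as stated.
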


This type of question was first raised by Erd\H{o}s and Hajnal~\cite{EH}: They asked whether there exists a $K_{k+1}$-free graph such that every $r$-edge-coloring of it induces a monochromatic $K_k$. This was answered in the affirmative by Folkman~\cite{Fo} for $r=2$, and by Ne{\v{s}}et{\v{r}}il and R{\"o}dl~\cite{NR} for arbitrary $r$, see also R{\"o}dl-Ruci\'{n}ski-Schacht~\cite{RRS} for recent developments.

We will use the hypergraph container method to give an alternative proof of Corollary~\ref{thm-prime} without invoking Theorem~\ref{thm-prime-density}. This proof draws on ideas from Nenadov-Steger~\cite{NS} and R{\"o}dl-Ruci\'{n}ski-Schacht~\cite{RRS}. 

If we work in the set of all integers instead of just the primes, we are able to get the following quantitative result, Proposition~\ref{thm-folkman}.\footnote{We remark that a very recent paper~\cite{high-girth} achieves essentially the same quantitative bound, but with a stronger ``large girth'' property.} A set $S\subseteq [n]$ is $(k;r)$-\emph{Folkman} if $S$ is $(k+1)$-AP-free and every $r$-coloring of $S$ contains a monochromatic $k$-AP. Define
$$g(k;r):=\min\{n: \exists S\subseteq [n], \ S\mbox{ is $(k;r)$-Folkman}\}.$$
Clearly $g(k;r)\ge W(k;r)$. To see this, simply notice that if $n=W(k;r)-1$, then there exists an $r$-coloring of $[n]$ with no monochromatic $k$-AP and any $(k+1)$-AP-free subset of it inherits this property, implying that $g(k;r)>n$.
\begin{prop}\label{thm-folkman}
	For any $r\ge 2$ and $k\ge 40$,
	$$g(k;r)\le k^{4k^3}W(k;r)^{5k^2}.$$
\end{prop}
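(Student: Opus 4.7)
The plan is the probabilistic method combined with the hypergraph container theorem, in the spirit of Nenadov--Steger~\cite{NS} and R\"odl--Ruci\'nski--Schacht~\cite{RRS}. Set $W := W(k;r)$ and $n := k^{4k^3}W^{5k^2}$, and let $S_0 := [n]_p$ with $p := \al n^{-1/k}$ for a small constant $\al = \al(k)$; let $S$ be obtained from $S_0$ by deleting one element from every $(k+1)$-AP in $S_0$. Chernoff and Markov bounds give, with positive probability, $|S_0| \ge np/2$ and a number of $(k+1)$-APs in $S_0$ (whose expectation is $\Th(n^2 p^{k+1}) = \Th(\al^k \cdot np)$) at most $np/4$, so that $|S| \ge np/4 > 0$ and $S$ is $(k+1)$-AP-free by construction. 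The remaining task is to verify the Ramsey (Folkman) property.

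Rather than prove Ramsey directly, I would establish the stronger statement that $S_0$ is $(1/(2r),k)$-Szemer\'edi with high probability. This transfers to $S$: since $|S|\ge|S_0|/2$, any $U\subseteq S$ with $|U|\ge |S|/r$ also has $|U|\ge |S_0|/(2r)$ and hence contains a $k$-AP, so in any $r$-coloring of $S$ a color class of size $\ge |S|/r$ yields a monochromatic $k$-AP. To prove the Szemer\'edi property for $S_0$, apply Corollary~\ref{cor-container} to $\cH_k$ on $[n]$ with parameters $\ep,\tau$ to be tuned, producing a family $\cC$ of containers covering every $k$-AP-free subset of $[n]$, each carrying at most $\ep n^2$ copies of $k$-APs. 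The supersaturation Lemma~\ref{lem-supsat}, applied at scale $M=W$ together with the partition bound $r_k(n)\le 2nr_k(W)/W$, forces $|C|\le n/(4r)$ for every $C\in\cC$ once $\ep$ is chosen sufficiently small. Consequently $\bE[|S_0\cap C|]\le pn/(4r)$, Chernoff gives $\bP[|S_0\cap C|\ge |S_0|/(2r)]\le \exp(-\Om(pn/r))$, and a union bound over $\cC$ succeeds as long as $r\log|\cC|\ll pn$, since every $k$-AP-free subset of $S_0$ lies in some $S_0\cap C$.

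The technical heart is the simultaneous balance of three constraints: (a) $\ep n\tau^{k-1}\ge k^{3k}$ from Corollary~\ref{cor-container}; (b) the supersaturation upper bound on $\ep$, of the form $\ep \lesssim (W/r_k(W))^{k-1}/(2^{O(k)} r^k n)$; and (c) $\tau\lesssim p/(r\log(1/\ep)\log(1/\tau))$ coming from the requirement $r\log|\cC|\ll pn$. Substituting $p=\al n^{-1/k}$ and eliminating $\ep,\tau$ reduces everything to a polynomial inequality in $n,W,r,k$ satisfied by $n=k^{4k^3}W^{5k^2}$ once $k\ge 40$: the exponent $4k^3$ absorbs the $k^{O(k^2)}$ factors arising from powers of $k^{3k}$ in (a), while the exponent $5k^2$ absorbs the $W^{O(k)}$ and $r^{O(k)}$ factors from (b) and (c), using $W(k;r)\ge r$. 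The main obstacle is exactly this delicate numerical balance; a secondary subtlety is that $r_k(W)/W$ need not be small a priori, which is handled by invoking Lemma~\ref{lem-supsat} at a mildly larger scale $M$ where $r_k(M)/M$ is controlled by Szemer\'edi's theorem, still comfortably within $n$.
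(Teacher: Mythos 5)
There is a genuine gap, and it sits exactly where you relegate it to a ``secondary subtlety''. Your plan routes the Ramsey property through the \emph{density} statement that $S_0$ is $(1/(2r),k)$-Szemer\'edi, which via the containers requires the supersaturation claim that every $A\subseteq[n]$ with $|A|\ge n/(4r)$ contains more than $\ep n^2$ $k$-APs, for $n=k^{4k^3}W^{5k^2}$. Lemma~\ref{lem-supsat} at scale $M=W$ cannot deliver this: its hypothesis needs $|A|/n\ge 8K\,r_k(W)/W$, and $r_k(W)/W$ is typically a constant (e.g.\ $r_3(9)/9=5/9$ while $W(3;2)=9$), so the inequality $1/(4r)\ge 8K\,r_k(W)/W$ fails badly; the bound $r_k(n)\le 2n\,r_k(W)/W$ from Fact~\ref{fact-dec} points the wrong way. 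Your proposed fix --- moving to a larger scale $M$ where $r_k(M)/M\le 1/(32Kr)$ ``by Szemer\'edi's theorem, still comfortably within $n$'' --- is precisely what cannot be justified: the only known control on such an $M$ is Gowers' bound $r_k(M)\le M/(\log\log M)^{c_k'}$, which forces $M\ge\exp\exp\bigl((32Kr)^{1/c_k'}\bigr)$, and no known inequality places this within any polynomial of $W(k;r)$. Worse, the intermediate claim itself may simply be false in the regime where the proposition has content: for $n$ polynomial in $W(k;r)$ the covering argument only yields $r_k(n)=O(n\log n/r)$, and Behrend--Rankin-type sets of density exceeding $1/(2r)$ are consistent with known bounds on $W(k;r)$; such a set would intersect $S_0$ in a $k$-AP-free subset of relative density about its own, so $S_0$ need not be $(1/(2r),k)$-Szemer\'edi at all. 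In short, $W(k;r)$ controls coloring statements, not density statements, and your reduction quietly converts the former into the latter at a cost that destroys the claimed bound $k^{4k^3}W(k;r)^{5k^2}$.

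The intended argument (the paper proves Corollary~\ref{thm-prime} in full and states that Proposition~\ref{thm-folkman} follows along the same lines) never passes through a density threshold at density $1/r$. Instead, for each potential bad $r$-coloring of the random set one places each (necessarily $k$-AP-free) color class inside a container $F_i$ from Corollary~\ref{cor-container} and applies a \emph{coloring} supersaturation at scale exactly $W=W(k;r)$, as in Lemma~\ref{lem-color-prime}: every $W$-AP that is entirely covered by $F_1\cup\dots\cup F_r$ yields, by the definition of $W(k;r)$, a monochromatic $k$-AP under the auxiliary coloring $x\mapsto\min\{i:x\in F_i\}$, and each $k$-AP lies in at most $W^2$ many $W$-APs; since each $F_i$ carries at most $\ep n^2$ $k$-APs, the leftover set $T=[n]\setminus\bigcup_i F_i$ must be large (of order $n/W$ up to constants). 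The bad event forces $S\cap T=\emptyset$, which after a union bound over the at most $|\cC|^r$ choices of $(F_1,\dots,F_r)$ has probability smaller than the FKG lower bound $\bP[S\text{ is }(k+1)\text{-AP-free}]\ge\exp\{-O(p^{k+1}n^2)\}$ with $p\approx n^{-1/k}$ (no deletion step is needed). All quantities in that argument are polynomial in $W$ and $k^{O(k)}$, which is what makes the bound $k^{4k^3}W(k;r)^{5k^2}$ attainable; your parameter bookkeeping in (a)--(c) is fine, but it is attached to a supersaturation input that is unavailable.
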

The proof of Proposition~\ref{thm-folkman} follows along the same line as Corollary~\ref{thm-prime}. We omit its proof.

For the proof of Corollary~\ref{thm-prime}, we need the following supersaturation lemma. Given a coloring $\phi$, denote by $\phi^{(i)}:=\phi^{-1}(i)$ the $i$-th color class. We write $W:=W(k;r)$.

\begin{lemma}\label{lem-color-prime}
	Given any coloring $\phi: \pri\rightarrow [r+1]$ with $n$ sufficiently large, if $\sum_{i\le r}\Gamma_k(\phi^{(i)})\le n^2/(\log n)^{W+1}$, then $|\phi^{(r+1)}|\ge n/(\log n)^{W+1}$.
\end{lemma}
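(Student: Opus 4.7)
The plan is to argue by contrapositive: assume $|\phi^{(r+1)}| < n/(\log n)^{W+1}$, and deduce that $\sum_{i\le r}\Ga_k(\phi^{(i)}) > n^2/(\log n)^{W+1}$. Set $A := \pri \setminus \phi^{(r+1)} = \bigcup_{i\le r}\phi^{(i)}$. By the Prime Number Theorem, $|\pri| = (1+o(1))n/\log n$, so the assumption gives $|A| \ge (1-o(1))|\pri| \ge |\pri|/2$ for $n$ large. Hence the supersaturation form of the Green--Tao theorem recalled in Section~\ref{sec-1-2} (applied with $W$-APs in place of $k$-APs) yields
\[
\Ga_W(A) = \Om\!\left(\frac{n^2}{\log^W n}\right).
\]

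The main step is a double count. Let $T$ be the set of pairs $(P,Q)$ with $P$ a $\phi$-monochromatic $k$-AP contained in $A$, $Q$ a $W$-AP contained in $A$, and $P\subseteq Q$. Restricted to $A$, the coloring $\phi$ uses only the $r$ colors $1,\ldots,r$, so by the very definition of $W=W(k;r)$ each $W$-AP in $A$ contains at least one monochromatic $k$-AP, giving $|T|\ge \Ga_W(A)$. On the other hand, if $N$ denotes the maximum, over $k$-APs $P\subseteq [n]$, of the number of $W$-APs in $[n]$ that contain $P$, then $|T|\le N\cdot \sum_{i\le r}\Ga_k(\phi^{(i)})$. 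Combining these bounds, provided $N$ is bounded by a constant depending only on $k$ and $W$, yields $\sum_{i\le r}\Ga_k(\phi^{(i)}) = \Om(n^2/\log^W n)$, which for $n$ sufficiently large exceeds $n^2/(\log n)^{W+1}$, contradicting the upper bound in the hypothesis.

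The delicate point, and the main obstacle, is the bound on $N$. Suppose $P$ has common difference $d$ and $Q\supseteq P$ has common difference $e$. Two consecutive terms of $P$ lying in $Q$ force $e\mid d$; writing $d = te$, the span $(k-1)d$ of $P$ cannot exceed the span $(W-1)e$ of $Q$, so $t\le (W-1)/(k-1)$. Thus $e$ ranges over at most $\lfloor(W-1)/(k-1)\rfloor$ divisors of $d$, and for each such $e$ there are at most $W$ choices for the position of the first term of $P$ inside $Q$, each determining $Q$ uniquely. This gives $N\le W\cdot\lfloor (W-1)/(k-1)\rfloor\le W^2$, a constant, as required. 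A naive divisor bound would only yield $N\le n^{o(1)}\cdot W$, which is too large since $n^{o(1)}$ dominates any fixed power of $\log n$; it is the observation $t\le (W-1)/(k-1)$ that shrinks $N$ to a constant and makes the whole calculation go through.
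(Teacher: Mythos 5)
Your proof is correct, and its combinatorial core coincides with the paper's: every $W$-AP of primes lying entirely in the first $r$ colors contains a monochromatic $k$-AP by the definition of $W=W(k;r)$, and a fixed $k$-AP lies in at most $W^2$ many $W$-APs (the paper gets this bound by noting that a $W$-AP is determined once one fixes which two of its terms equal the first two terms of the $k$-AP; your divisor-plus-span argument is an equivalent derivation, and your remark that a naive divisor bound would be too lossy is well taken). The only real difference is in how the final accounting is organized. You argue by contrapositive: if $\phi^{(r+1)}$ were small, then $A=\pri\setminus\phi^{(r+1)}$ has size $(1-o(1))|\pri|$, and you invoke the supersaturated Green--Tao theorem for the dense subset $A$ (with length $W$ in place of $k$) to produce $\Om(n^2/\log^W n)$ many $W$-APs inside $A$, which via the $W^2$ bound forces too many monochromatic $k$-APs. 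The paper instead works directly: it uses only the count of $W$-APs in $\pri$ itself, shows via the hypothesis and the $W^2$ bound that at most half of them are colored entirely by the first $r$ colors, so at least $c_Wn^2/(2\log^W n)$ of them meet $\phi^{(r+1)}$, and then divides by the trivial bound that each element lies in fewer than $2n$ many $W$-APs to lower-bound $|\phi^{(r+1)}|$. Thus your version leans slightly more on the dense-subset form of the Green--Tao supersaturation (which the paper does state in Section~\ref{sec-1-2}, so this is legitimate), in exchange for avoiding the per-element degree count; both routes give the lemma with room to spare.
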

\begin{proof}
	Fix an arbitrary $(r+1)$-coloring $\phi$ of $\pri$ such that $\sum_{i\le r}\Gamma_k(\phi^{(i)})\le n^2/(\log n)^{W+1}$. Recall that the number of $W$-term arithmetic progressions in $[n]$ is at least $c_Wn^2/(\log n)^{W}$ for some constant $c_W>0$. Let $x\cdot c_Wn^2/(\log n)^{W}$ be the number of $W$-APs colored completely by one of the first $r$ colors. Then by the definition of $W(k;r)$, each of these $W$-APs induces a monochromatic $k$-AP in the first $r$ colors. We claim that every $k$-AP is contained in at most $W^2$ many $W$-APs. Indeed, given any $k$-AP $\{a_1,\ldots, a_k\}$, a $W$-AP $\{b_1,\ldots, b_W\}$ containing it will be uniquely determined once we fix $a_1=b_i$, $a_2=b_j$ for $1\le i\neq j\le W$. The number of choices of the two indices $i$ and $j$ is at most $W^2$. Therefore, the number of monochromatic $k$-APs in the first $r$ colors is at least $(xc_Wn^2/(\log n)^{W})/W^2$. On the other hand, this number is at most $n^2/(\log n)^{W+1}$, thus for sufficiently large $n$ we have
	$$\frac{xc_Wn^2}{(\log n)^{W}W^2}\le \frac{n^2}{(\log n)^{W+1}} \quad\Rightarrow  \quad x\le \frac{W^2}{c_W\log n}\le \frac{1}{2}.$$ 
	So the number of $W$-APs containing at least one element from $\phi^{(r+1)}$ is at least $c_Wn^2/2(\log n)^{W}$. Note that each element in $[n]$ is in at most $W\cdot \frac{n}{W-1}<2n$ many $W$-APs. Indeed, there are at most $W$ choices to decide which term in a $W$-AP an element in $[n]$ will be and at most $\frac{n}{W-1}$ many choices to choose the common difference. Therefore, $|\phi^{(r+1)}|\ge (c_Wn^2/2(\log n)^{W})/2n\ge n/(\log n)^{W+1}$, as desired.
\end{proof}

\begin{proof}[Proof of Corollary~\ref{thm-prime}]
	We set the parameters as follows:
	\begin{eqnarray}\label{eq-param-prime}
		p=\frac{1}{n^{1/k}(\log n)^W}, \quad\quad \ep=\frac{1}{(\log n)^{2W}} ,\quad\quad \tau=\frac{1}{n^{1/k}(\log n)^{3W}}. 
	\end{eqnarray}
	For any $k\ge 3$, and $p,\ep,\tau$ defined in~\eqref{eq-param-prime}, and sufficiently large $n$, we have that 
	$$1000rk^{3k}\log\frac{1}{\tau}\log\frac{1}{\ep}=1000rk^{3k}\cdot \left(\frac{\log n}{k}+3W\log\log n\right)\cdot 2W\log\log n\le \log^2 n.$$
	Thus, we have
	\begin{eqnarray}\label{eq-fi1-prime}
		\frac{np}{10(\log n)^{W+1}}=\frac{n^{1-1/k}}{10(\log n)^{2W+1}}\ge \frac{n^{1-1/k}}{(\log n)^{3W-2}}= n\tau\cdot\log^2n\ge 1000rk^{3k}n\tau\log\frac{1}{\tau}\log\frac{1}{\ep}.
	\end{eqnarray}
	We also need the following inequality.
	\begin{eqnarray}\label{eq-fi2-prime}
		\frac{np}{10(\log n)^{W+1}}=\frac{n^{1-1/k}}{10(\log n)^{2W+1}}\ge \frac{n^{1-1/k}}{(\log n)^{k+(k+1)W}}=p^{k+1}\frac{n^2}{(\log n)^{k}}.
	\end{eqnarray}
	
	Let $S\subseteq \pri$ be a random subset of $\pri$, in which every element is chosen with probability $p$ as defined in~\eqref{eq-param-prime}, independently of each other. Denote by $B_1$ the event that every $r$-coloring of $S$ contains a monochromatic $k$-AP and by $B_2$ the event that $S$ is $(k+1)$-AP-free. We will show that $\bP[B_1]+\bP[B_2]>1$, which then implies that with positive probability there is a choice of $S\subseteq \pri$ with the desired properties.
	
	To estimate $\bP[B_2]$, we apply the FKG inequality (see e.g.~\cite{Bo}). Note that the number of $(k+1)$-APs in $\pri$ is at most $Cn^2/(\log n)^{k+1}$ for some constant $C>0$ depending only on $k$.
	\begin{eqnarray}\label{eq-APfree}
		\bP[B_2]\ge \left(1-p^{k+1}\right)^{Cn^2/(\log n)^{k+1}}>\exp\left\{-p^{k+1}\cdot \frac{n^2}{(\log n)^k}\right\}.
	\end{eqnarray}
	
	Let $\ep,\tau$ be as defined in~\eqref{eq-param-prime}. We claim that $\ep,\tau$ satisfy~\eqref{eq-container}. It follows immediately from the definition of $\tau$ that $\tau<\frac{1}{200k^{2k}}$. For the other inequality, 
	\begin{eqnarray*}
		\ep n\tau^{k-1}&=&\frac{n}{(\log n)^{2W}}\cdot\frac{1}{n^{\frac{k-1}{k}}(\log n)^{3(k-1)W}}= \frac{n^{1/k}}{(\log n)^{(3k-1)W}}\ge k^{3k}.
	\end{eqnarray*}
	Thus we can apply Corollary~\ref{cor-container} with $\ep, \tau$  to obtain $\cC$, the set of containers.
	
	
	We now bound $\bP[\overline{B_1}]$ from above. Note that $\overline{B_1}$ implies that there is an $r$-coloring of $S$ with no monochromatic $k$-AP. The idea is that if such a coloring exists, then necessarily there is a fairly dense set disjoint from the random set $S$, which is highly unlikely. Fix one such coloring $\sigma: S\rightarrow [r]$ with color classes $X_1,\ldots, X_r$. Since every $X_i$ is $k$-AP-free, $X_i\subseteq F_i$ for some container $F_i\in\cC$. Define $T=\pri\setminus \bigcup_i F_i$, so $S\cap T=\emptyset$. Notice that $T$ is independent of the initial coloring $\sigma$, and the number of choices for $T$ is at most $|\cC|^r$.
	
	We claim that the set $T$ obtained above is large: $|T|\ge n/(\log n)^{W-1}$. To see this, define an auxiliary $(r+1)$-coloring $\phi: \pri\rightarrow [r+1]$ as follows: 
	$\forall x\in T$, $\phi(x)=r+1$, and $\forall x\not\in T$, $\phi(x)=\min\{i: x\in F_i\}$. By Corollary~\ref{cor-container} (ii), the number of monochromatic $k$-APs in the first $r$ colors of $\phi$ is at most $r\cdot\ep n^2=rn^2/(\log n)^{2W}\le n^2/(\log n)^{W+1}$. Then by Lemma~\ref{lem-color-prime}, $|T|\ge n/(\log n)^{W-1}$ as desired.
	
	Applying the union bound over all possible choices of $T$, we obtain that
	\begin{eqnarray*}
		\bP[\overline{B_1}]&\le&\bP[\cup_T S\cap T=\emptyset]\le |\cC|^r\cdot (1-p)^{n/(\log n)^{W+1}}\\
		&\le&\exp\left\{r\cdot 1000k^{3k}n\tau\log\frac{1}{\tau}\log\frac{1}{\ep}-\frac{np}{(\log n)^{W+1}}\right\}\\
		&\overset{\eqref{eq-fi1-prime}}{\le}&\exp\left\{-\frac{np}{2(\log n)^{W+1}}\right\}\overset{\eqref{eq-fi2-prime}}{\le}\exp\left\{-p^{k+1}\frac{n^2}{(\log n)^{k}}\right\}<\bP[B_2].
	\end{eqnarray*}
	Thus we have $\bP[B_1]+\bP[B_2]=1-\bP[\overline{B_1}]+\bP[B_2]>1$ as desired.
\end{proof}


\subsection{$d$-fold sumset}\label{sec-1-4}
Denote by $dA$ the $d$-fold sumset of $A$: $dA=A+\ldots +A$, where $A\subseteq \mathbb{Z}_p$. How large does a set in $\mathbb{Z}_p$ have to be so that it is a $d$-fold sumset? Define $f_d(p)$ to be the maximum integer $\ell$ such that for any set $F$ of size $\ell$, $\mathbb{Z}_p-F=dA$ for some $A\subseteq \mathbb{Z}_p$. Green and Gowers~\cite{G}, using the discrete Fourier method, showed that $\Omega(\log p)=f_2(p)=O(p^{2/3}\log^{1/3}p)$. It was later improved by Alon~\cite{A} using eigenvalues of the Cayley sum-graphs:
$$\Omega\left(\frac{\sqrt{p}}{\sqrt{\log p}}\right)=f_2(p)=O\left(\frac{p^{2/3}}{\log^{1/3}p}\right).$$
It remains a difficult open question to close the gap above. In this section, we investigate this function for the $d$-fold sumset for every $d\ge 2$. Green and Gowers' proof in fact works for all $d\ge 2$, giving an upper bound\footnote{Perhaps a more involved argument using the Fourier technique can give improvement on this bound obtained directly from their argument.} $$f_d(p)=O(p^{2/3}\log^{1/3}p).$$

Our next result gives an improvement when $d\ge 3$.
\begin{theorem}\label{thm-dfold}
	For any $d\ge 2$,
	$$f_d(p)=O(p^{\frac{d}{2d-1}+o(1)}).$$
\end{theorem}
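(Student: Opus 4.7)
The plan is to deduce the upper bound from a counting estimate: to show $f_d(p)\le\ell$ it is enough to show that the number of $d$-fold sumsets in $\mathbb{Z}_p$ of size exactly $p-\ell$ is strictly smaller than $\binom{p}{\ell}$, since then pigeonhole on the map $B\mapsto \mathbb{Z}_p\setminus B$ produces a set $F$ of size $\ell$ that is not the complement of any $d$-fold sumset. The target is $\ell=p^{d/(2d-1)+o(1)}$.

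First I would restrict the admissible generator sizes. If $B=dA$ with $|B|=p-\ell$, Cauchy--Davenport forces $|A|\le p/d$ (otherwise $dA=\mathbb{Z}_p$), while the trivial bound $|dA|\le |A|^d$ forces $|A|\ge(p-\ell)^{1/d}=\Theta(p^{1/d})$. Next, for each candidate $F\subseteq\mathbb{Z}_p$ with $|F|=\ell$, I would set up the $d$-uniform hypergraph $\cH_F$ on $\mathbb{Z}_p$ whose edges are the $d$-sets $\{a_1,\ldots,a_d\}$ with $a_1+\cdots+a_d\in F$. Any generator $A$ with $dA=\mathbb{Z}_p\setminus F$ is then independent in $\cH_F$. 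Routine double counting gives $e(\cH_F)=\Theta(\ell p^{d-1})$, average degree $\Theta(\ell p^{d-2})$, and maximum codegrees $\Delta_j=O(\ell p^{d-1-j})$ for $2\le j\le d-1$, with $\Delta_d\le 1$.

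I would then feed $\cH_F$ into Theorem~\ref{thm-container}. With $\varepsilon=p^{-o(1)}$, the binding term in $\Delta(\cH_F,\tau)$ is the $\Delta_d/(\bar d\,\tau^{d-1})$ contribution, which requires $\tau^{d-1}\gtrsim 1/(\ell p^{d-2})$; writing $\ell=p^\alpha$, the smallest admissible $\tau$ is $p^{-(\alpha+d-2)/(d-1)+o(1)}$. The container theorem then furnishes, for each $F$, a family of at most $\exp(p^{(1-\alpha)/(d-1)+o(1)})$ containers covering every valid $A$, each container carrying at most $\varepsilon\,e(\cH_F)$ edges of $\cH_F$. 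Moreover, by the Saxton--Thomason proof each container comes with a ``fingerprint'' $S\subseteq A$ of size $O(p\tau)$ such that $S$ alone determines the container. Encoding each sumset $B$ by the pair $(F,S)$ with $F=\mathbb{Z}_p\setminus B$ and $S$ a fingerprint of some generator $A\subseteq C(S)$ gives a bound on the number of sumsets in terms of $\binom{p}{\ell}\binom{p}{p\tau}$; requiring the latter to be dominated by $\binom{p}{\ell}$ (so that the total count is still below $\binom{p}{\ell}$) is the arithmetic constraint that, after substituting the binding $\tau$, yields the self-consistent exponent $\alpha=d/(2d-1)$.

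The main obstacle will be removing the $\binom{p}{\ell}$ overhead coming from naively indexing by $F$: as stated, the encoding above pays once for $F$ and once for $S$, whereas we can only afford roughly $\binom{p}{\ell}$ descriptions total. The plan is to eliminate this overhead either by running Theorem~\ref{thm-container} on the bipartite $(d+1)$-uniform hypergraph on $\mathbb{Z}_p\sqcup\mathbb{Z}_p$ whose edges are $\{a_1,\ldots,a_d\}_L\cup\{f\}_R$ with $\sum a_i=f$ (so that both $A$ and $F$ are controlled by a single container pair), or by proving a supersaturation statement in the spirit of Lemma~\ref{lem-supsat} that forces any container $C$ with $|dC|\ge p-\ell$ to have atypical additive structure, thereby sharply limiting the number of distinct sumsets emerging from a given container. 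Making one of these routes yield exactly the exponent $d/(2d-1)$, rather than a weaker $(2d-2)/(2d-1)$ that falls out of the naive counting, is the quantitative heart of the argument and is where Cauchy--Davenport and Pl\"unnecke--Ruzsa-type inputs will likely be needed to pin down the range of $|A|$.
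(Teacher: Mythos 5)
There is a genuine gap, and it sits exactly where you flag it. Your encoding of a sumset $B$ by the pair $(F,S)$ with $F=\mathbb{Z}_p\setminus B$ is vacuous: $F$ alone already determines $B$, so this scheme can never certify that fewer than $\binom{p}{\ell}$ sets of size $\ell$ have sumset complements --- indexing by $F$ costs you the entire budget before the containers contribute anything. The two escape routes you sketch (a bipartite $(d+1)$-uniform container argument coupling $A$ and $F$, or a supersaturation statement limiting how many sumsets a single container can produce) are precisely the unproven core, and you concede that the naive version only reaches a weaker exponent. As written, the argument does not establish $f_d(p)=O(p^{d/(2d-1)+o(1)})$; it is a plan whose quantitative heart is missing. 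There are also minor loose ends (e.g.\ Theorem~\ref{thm-container} requires codegree conditions you only assert, and ``each container comes with a fingerprint of size $O(p\tau)$ determining it'' is a feature of the proof in the cited works, not of the statement you invoke), but these are secondary to the counting obstruction.

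The missing idea, which is how the paper proceeds, is to split the deleted set into two halves and fix one of them \emph{before} any counting: choose a random $T$ of size $t\sim(p\log p)^{d/(2d-1)}$, observe (Claim~\ref{claim-obs}) that if $\mathbb{Z}_p\setminus(T\cup T')$ is a sumset $dA$ then $A$ is independent in the Cayley sum-hypergraph $G(\mathbb{Z}_p,T)$, and compare the $\binom{p-t}{t}$ choices of $T'$ with the at most $\binom{p}{\alpha(G(\mathbb{Z}_p,T))}$ candidate generators. This removes your $\binom{p}{\ell}$ overhead because the independence constraint depends only on the fixed $T$, not on the varying half $T'$. The smallness of $\alpha(G(\mathbb{Z}_p,T))$ is then proved without containers at all: by Proposition~\ref{prop-random}, every $m$-set $S$ contains a certificate $R\subseteq S$ with $|R|\le m^{1/d}/c$ and $|\widehat{d}R|\ge cm$, so a union bound over the roughly $\sum_{i\le m^{1/d}/c}\binom{p}{i}$ small sets $R$, each avoided by the random $T$ with probability at most $e^{-cmt/p}$, shows $\alpha(G(\mathbb{Z}_p,T))<t/2$ with high probability; balancing $t^{1/d}\log p$ against $t^2/p$ is what produces the exponent $d/(2d-1)$. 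If you want to salvage your container-based route you would have to supply one of your two proposed enhancements in full; otherwise the certificate-plus-random-$T$ scheme is both shorter and already delivers the stated bound.
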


Here, to prove Theorem~\ref{thm-dfold}, we use Proposition~\ref{prop-random} instead to find such a small certificate. Though our bound for $d=2$ is weaker than the previous bounds by a polylog factor, it easily works for any $d\ge 2$. We think that the bound above is far from best possible. We conjecture that $f_d(p)=p^{c_d+o(1)}$, where $c_d\rightarrow 0$ as $d\rightarrow \infty$.

First, we need to define an auxiliary hypergraph, from which the upper bound on $f_d(p)$ can be derived. Given a set $T\subseteq \mathbb{Z}_p$, we define the $d$-uniform Cayley sum-hypergraph $G(\mathbb{Z}_p,T)$ generated by $T$ as follows: $V(G(\mathbb{Z}_p,T))=\mathbb{Z}_p$ and its edge set consists of all $d$-tuples $\{x_1,\ldots,x_d\}$ such that $x_1+\ldots+x_d=t$ for some $t\in T$. The following claim, due to Green~\cite{G} and Alon~\cite{A}, gives a way to obtain upper bound, we repeat here their short proof.

\begin{claim}\label{claim-obs}
	If there exists a set $T$ of size $t$ such that $t>2\alpha(G(\mathbb{Z}_p,T))$, then $f_d(p)\le 2t$.
\end{claim}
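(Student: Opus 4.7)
The goal is to exhibit a set $F\subseteq\mathbb{Z}_p$ of size $2t$ whose complement cannot be written as $dA$; this forces $f_d(p)<2t$, which is what the claim asserts. The natural choice is to take $F$ to contain $T$, thereby coupling the hypothesis on $\alpha(G(\mathbb{Z}_p,T))$ directly to any putative sumset representation.

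Concretely, I would let $F=T\cup X$ where $X$ is an arbitrary subset of $\mathbb{Z}_p\setminus T$ of size $t$, so that $|F|=2t$. Suppose for contradiction that $\mathbb{Z}_p\setminus F=dA$ for some $A\subseteq \mathbb{Z}_p$. Since $T\subseteq F$, we have $T\cap dA=\emptyset$; in particular, no $d$-element subset $\{x_1,\ldots,x_d\}\subseteq A$ can satisfy $x_1+\cdots+x_d\in T$, for otherwise this sum would witness $T\cap dA\neq\emptyset$. Hence $A$ contains no edge of $G(\mathbb{Z}_p,T)$, i.e.\ $A$ is independent, and the hypothesis $t>2\alpha(G(\mathbb{Z}_p,T))$ then yields
\[
|A|\;\le\;\alpha(G(\mathbb{Z}_p,T))\;<\;t/2.
\]

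To close the contradiction, I would combine this upper bound on $|A|$ with the size identity $|dA|=p-|F|=p-2t$. The elementary bound $|dA|\le\binom{|A|+d-1}{d}$ (every element of $dA$ arises from an unordered $d$-multiset drawn from $A$) together with $|A|<t/2$ gives $|dA|<\binom{t/2+d-1}{d}$, which in the regime of interest—namely $p$ sufficiently large compared to $t^d$, which is precisely the regime in which the conclusion $f_d(p)\le 2t$ is informative—is much smaller than $p-2t$. This produces the desired contradiction and shows that no such $A$ exists for this $F$.

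The main subtlety I anticipate is that $T\cap dA=\emptyset$ actually forbids \emph{all} representations of elements of $T$ as $d$-fold sums from $A$, including those with repeated summands, while independence of $A$ in $G(\mathbb{Z}_p,T)$ only rules out $d$-subsets (no repetition) of $A$ summing into $T$. Fortunately, independence alone is already strong enough to give $|A|<t/2$, and the factor $2$ in the bound $|F|=2t$ provides the slack needed to absorb any inefficiency coming from repeated-summand representations—one can, for instance, enlarge $F$ by the auxiliary set $\{a\in\mathbb{Z}_p:da\in T\}$ (of size at most $t$ when $\gcd(d,p)=1$) to directly rule out the diagonal representations $t=da$ if needed, without exceeding $|F|\le 2t$.
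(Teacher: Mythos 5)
Your reduction to independence is fine (and your worry about repeated summands is unnecessary: to conclude that $A$ is independent you only need the implication ``an edge of $G(\mathbb{Z}_p,T)$ inside $A$ gives $d$ \emph{distinct} elements of $A$ summing into $T$, hence $dA\cap T\neq\emptyset$'', which is exactly the direction you use). The genuine gap is in how you close the contradiction. You fix an \emph{arbitrary} $X$ and argue that $|dA|\le\binom{|A|+d-1}{d}<\binom{t/2+d-1}{d}$ cannot equal $p-2t$, but this requires $\binom{t/2+d-1}{d}<p-2t$, i.e.\ roughly $t^d\lesssim p$. That hypothesis appears nowhere in the claim, and it is \emph{false} in the regime where the claim is actually applied: in the proof of Theorem~\ref{thm-dfold} one takes $t=\Theta\bigl((p\log p)^{d/(2d-1)}\bigr)$, so $t^d=p^{d^2/(2d-1)+o(1)}\gg p$ for every $d\ge 2$ (already $p^{4/3+o(1)}$ when $d=2$). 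In that range an independent set of size just under $t/2$ can in principle have $|dA|\ge p-2t$, so no cardinality contradiction is available, and your stronger assertion that \emph{every} choice of $X$ yields a non-sumset complement is not established (and is not what the claim needs). The patch you suggest (adding $\{a: da\in T\}$ to $F$) addresses only the irrelevant repeated-summand point, not this size obstruction.

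The paper's proof avoids the issue by not fixing the second half of $F$ at all: it considers all $\binom{p-t}{t}$ choices of a $t$-set $T'\subseteq\mathbb{Z}_p\setminus T$ and observes that if $\mathbb{Z}_p\setminus(T\cup T')$ were a sumset $dA$ for every such $T'$, then distinct $T'$ would produce distinct sets $dA$, hence at least $\binom{p-t}{t}$ distinct independent sets $A$ of $G(\mathbb{Z}_p,T)$; but since every independent set has size at most $\alpha(G(\mathbb{Z}_p,T))<t/2$, there are fewer than that many (this is where $t>2\alpha$ enters, via $\binom{p-t}{t}>\binom{p}{\alpha}$-type counting). Pigeonhole then gives the existence of \emph{one} good $T'$, which is all that $f_d(p)\le 2t$ requires. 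To repair your argument you would need to replace the pointwise cardinality bound on $dA$ by some such counting over completions $X$; as written, the proposal does not prove the claim.
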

\begin{proof}
	If we have a set $F\subseteq \mathbb{Z}_p$ such that $\mathbb{Z}_p-F$ is not a sumset, then $f_d(p)\le |F|$. We will find such a set $F=T\cup T'$ in two steps with $|T|=|T'|=t$, where $T$ is the set guaranteed by the hypothesis, i.e., $t>2\alpha(G(\mathbb{Z}_p,T))$. Notice that if any set $S\subseteq \mathbb{Z}_p-T$ is a $d$-fold sumset for some set $A\subseteq \mathbb{Z}_p$, then $A$ has to be an independent set in $G(\mathbb{Z}_p,T)$. Note also that the number of $d$-fold sumsets $S$ cannot be larger than the number of sets $A$ that we generate $S=dA$ from. We then choose another set $T'$ of size $t$, there are ${p-t \choose t}$ many choices. Suppose that for each of these choices, $S=\mathbb{Z}_p-T-T'$ is a $d$-fold sumset $dA$, then from the observation above we have that the number of choices for $A$ is at least 
	$${p-t\choose t}> {p\choose \alpha(G(\mathbb{Z}_p,T))}.$$ 
	This is impossible since $A$ is an independent set in $G(\mathbb{Z}_p,T)$. Thus there exists a $T'$ such that $F=T\cup T'$ is the desired set.
\end{proof}

We use the following slight variation of Proposition 19 in~\cite{G-count-ss}. 
\begin{prop}\label{prop-random}
	For any $d\ge 2$, there exists $c:=c(d)$ such that the following holds. For any set $S\subseteq \mathbb{Z}_p$ of size $m$ with $m$ sufficiently large, there exists a set $R\subseteq S$ with $|R|\le m^{1/d}/c$ and $|\widehat{d}R|\ge cm$, where $\widehat{d}R:=\{x: x=a_1+\ldots+a_d \mbox{ with distinct } a_i\in R\}$.
\end{prop}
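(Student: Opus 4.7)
The plan is to apply the probabilistic method via random sparsification. Let $c_0=c_0(d)>0$ be a sufficiently small constant to be chosen, set $q:=c_0 m^{1/d-1}$, and let $R_0\subseteq S$ be formed by including each element of $S$ independently with probability $q$. Then $\bE|R_0|=c_0 m^{1/d}$, and Chernoff's inequality yields $|R_0|\le 2c_0 m^{1/d}$ except on an event of probability $e^{-\Omega(m^{1/d})}$.

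The heart of the argument is to show that $|\widehat{d}R_0|$ is comparable to $\binom{|R_0|}{d}$. Let $B$ denote the number of unordered pairs $\{T,T'\}$ of distinct $d$-subsets of $R_0$ with equal sums. A simple multiplicity count gives $|\widehat{d}R_0|\ge\binom{|R_0|}{d}-B$ (if $k$ subsets share a sum, they contribute $k-1\le\binom{k}{2}$ to the deficit, bounded by their contribution to $B$). By linearity, $\bE\binom{|R_0|}{d}=\binom{m}{d}q^d=\Theta(c_0^d m)$, while $\bE B=\sum_{j=0}^{d-1}(C_j/2)\,q^{2d-j}$, where $C_j$ is the number of ordered pairs $(T,T')$ of distinct $d$-subsets of $S$ with $|T\cap T'|=j$ and the same sum.

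The key estimate is $C_j=O(m^{2d-j-1})$ for $0\le j\le d-2$ (and $C_{d-1}=0$, since two $d$-subsets differing in exactly one element cannot have equal sums). Indeed, choose the common part $T\cap T'$ in $O(m^j)$ ways and $T\setminus T'$ in $O(m^{d-j})$ ways; this fixes the target sum for $T'\setminus T$, so the $d-j$ elements of $T'\setminus T$ can be chosen in $O(m^{d-j-1})$ ways (pick any $d-j-1$ freely, the last is determined by the sum constraint and must lie in $S$). Plugging $q=c_0 m^{1/d-1}$, a short exponent calculation yields $C_j q^{2d-j}=O(c_0^{2d-j} m^{(d-j)/d})$, so $\bE B=O(c_0^{2d}\, m)$, dominated by the $j=0$ term.

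Choosing $c_0$ small enough that $\bE B\le\tfrac{1}{100}\bE\binom{|R_0|}{d}$ gives $\bE|\widehat{d}R_0|=\Omega(c_0^d m)$. Since the rare event $|R_0|>2c_0 m^{1/d}$ contributes at most $\binom{m}{d}\cdot e^{-\Omega(m^{1/d})}=o(1)$ to this expectation, some realization satisfies both $|R_0|\le 2c_0 m^{1/d}$ and $|\widehat{d}R_0|=\Omega(c_0^d m)$; setting $R:=R_0$ and $c:=\min\bigl(1/(2c_0),\,\Omega(c_0^d)\bigr)$ completes the proof. The main obstacle is the bound on $C_j$: it is where the additive structure of $S$ (e.g.\ arithmetic progressions) is absorbed, and the sharp case $C_0=\Theta(m^{2d-1})$ forces the scaling $q\sim m^{1/d-1}$, which is exactly what the target $|R|\lesssim m^{1/d}$ demands, so there is no room to lose in any step.
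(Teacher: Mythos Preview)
The paper does not give its own proof of this proposition; it is quoted as a slight variation of Proposition~19 in Green~\cite{G-count-ss}. Your random-sparsification argument---sample $R_0\subseteq S$ at density $\Theta(m^{1/d-1})$, bound the expected number of sum-collisions among $d$-subsets via $C_j=O(m^{2d-j-1})$, and deduce by the first-moment method that some realisation has $|R_0|=O(m^{1/d})$ and $|\widehat{d}R_0|=\Omega(m)$---is correct and is essentially the standard proof (and indeed the one Green gives). One small remark: your closing comment that the $C_j$ bound ``absorbs the additive structure of $S$'' is a bit misleading, since that estimate is pure counting and uses nothing about $S$ beyond $|S|=m$ and that it sits in an abelian group.
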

\begin{proof}[Proof of Theorem~\ref{thm-dfold}]
	Let $T$ be a random $t$-set of $\mathbb{Z}_p$ for $t=\frac{2}{c^2}(p\log p)^{d/(2d-1)}$. Let $G:=G(\mathbb{Z}_p, T)$. We will show that with high probability $\alpha(G)<t/2$, then the bound on $f_d(p)$ follows from Claim~\ref{claim-obs}.
	
	For every set $S\subseteq G$ of size $m=t/2$, by Proposition~\ref{prop-random}, $S$ contains a set $R$ of size at most $m^{1/d}/c$ and $|\widehat{d}R|\ge cm$. Clearly, $\widehat{d}R\subseteq dS$. It then follows that if every $R$ of size at most $m^{1/d}/c$ with $|\widehat{d}R|\ge cm$ is not an independent set, then $\al(G)< m$. Fix such a choice of $R$, then $R$ is an independent set only when $\widehat{d}R\cap T=\emptyset$. Thus the probability that $R$ is independent is at most $(1-\frac{cm}{p})^t\le e^{-cmt/p}$. Applying the union bound, we obtain that the probability that there exists an $R$ that is independent is at most
	$$\sum_{i=1}^{m^{1/d}/c}{p\choose i}\cdot e^{-cmt/p}\le \exp\left\{\frac{1}{c}t^{1/d}\log p-\frac{ct^2}{2p}\right\}=o(1),$$
	where the last equality follows from $t=\frac{2}{c^2}(p\log p)^{d/(2d-1)}$. Thus $\al(G)< t/2$ as desired.
\end{proof}

Here is a related conjecture of Alon~\cite{A}. 
\begin{conj}\label{conj-alon}
	There exist constants $c_1, c_2$ so that the following holds. Let $\Ga$ be an abelian group of odd order $n$. Then for every $1\le t\le n$, there is a subset $T\subseteq \Ga$ of size $t$ so that for $G:=G(\Ga, T)$, 
	$$\al(G)\le c_1\frac{n}{t}(\log n)^{c_2}.$$
\end{conj}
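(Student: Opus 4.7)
The plan is to apply the probabilistic method: take $T\subseteq \Gamma$ uniformly among $t$-subsets and estimate the expected number of independent sets of size $m=c_1(n/t)(\log n)^{c_2}$ in $G(\Gamma,T)$. For a fixed $S\subseteq \Gamma$ of size $m$, write $S\,\widehat{+}\,S:=\{u+v:u,v\in S,\,u\ne v\}$ and $k=|S\,\widehat{+}\,S|$. Then $S$ is independent in $G(\Gamma,T)$ iff $T\cap(S\,\widehat{+}\,S)=\emptyset$, which happens with probability $\binom{n-k}{t}/\binom{n}{t}\le e^{-kt/n}$ for $k=o(n)$. Stratifying by $k$, it suffices to prove
\[
\sum_{k\ge 2m-3} N(m,k)\,e^{-kt/n}=o(1),
\]
where $N(m,k)$ denotes the number of $m$-subsets $S\subseteq\Gamma$ with $|S\,\widehat{+}\,S|\le k$.

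The two extreme regimes are encouraging. At the lower end $k=O(m)$, the set $S$ is essentially an arithmetic progression, so $N(m,O(m))\le n^{O(1)}$, and the contribution $n^{O(1)}e^{-\Omega(mt/n)}$ is $o(1)$ once $m\gg(n/t)\log n$. At the upper, generic end $k=\Theta(m^2)$, the trivial count $N(m,k)\le\binom{n}{m}\le(en/m)^m$ gives contribution $\exp\!\bigl(m\log(en/m)-\Omega(m^2t/n)\bigr)$, again $o(1)$ in the same range of $m$. Both endpoints match the conjectured exponent, confirming that $m\sim(n/t)\cdot\operatorname{polylog}(n)$ is the correct scale to target.

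The content of the proof is to handle intermediate $k=Km$ with $1\ll K\ll m$. The natural tool is Freiman's theorem in abelian groups in its sharpest currently available form (Sanders): any $S$ with $|S\,\widehat{+}\,S|\le Km$ is contained in a coset progression of dimension $(\log K)^{O(1)}$ and size $Km\cdot(\log K)^{O(1)}$. Counting such progressions (at most $n^{(\log K)^{O(1)}}$ by choosing basis and periods) and their $m$-subsets yields
\[
N(m,Km)\le n^{(\log K)^{O(1)}}\cdot\bigl(eK\cdot(\log K)^{O(1)}\bigr)^m.
\]
Substituted into the sum, however, this bound is severely loose near the saddle point $K\asymp n/t$, where the contribution behaves like $(n/t)^m\cdot n^{\operatorname{polylog}(n/t)}$ and dwarfs $1$. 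The naive union bound on $\mathbb{E}[\#\text{independent $m$-sets}]$ therefore fails to close, much as the first-moment method fails to locate $\chi(G(n,p))$: correlations between the events ``$S$ is independent'' for overlapping $S$ must be exploited.

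The principal obstacle is thus to sharpen the argument beyond the union bound. Three natural routes present themselves: (i) a Janson or second-moment analysis of the number of edges in a fixed candidate $S$, exploiting that the random variables $\mathbf{1}[u+v\in T]$ for disjoint pairs $\{u,v\}$ are nearly independent; (ii) a spectral attack via the eigenvalues of $G(\Gamma,T)$, which are the Fourier coefficients $\widehat{1_T}(\chi)$ of the indicator of $T$ and concentrate at roughly $\sqrt{t}$ for random $T$, whence the expander mixing lemma yields $\alpha(G)\lesssim n/\sqrt{t}\cdot\operatorname{polylog}(n)$---the strategy pioneered by Alon~\cite{A} for $d=2$; (iii) a hypergraph container argument on the auxiliary $3$-uniform Schur-triple hypergraph on $\Gamma$ encoding $u+v=w$, in the spirit of the containers used in Section~\ref{subsec-container}. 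Each of these neighboring techniques seems to lose at least a polylogarithmic (or a $\sqrt{t}$) factor, so obtaining the conjectured $(n/t)(\log n)^{c_2}$ with \emph{any} finite $c_2$ appears to require combining them with fresh structural input from additive combinatorics---presumably the reason the conjecture remains open.
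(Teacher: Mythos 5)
You have not proved the statement, and you should be aware that the paper does not prove it either: Conjecture~\ref{conj-alon} is stated as an open conjecture of Alon~\cite{A}, with only partial progress recorded --- it is known for $t=\Om(n)$, Alon's spectral argument gives $\al(G)\le \frac{n}{t^{1/2}}\log n$, and the paper's own contribution (via the idea in the proof of Theorem~\ref{thm-dfold}) is the weaker bound $\al(G)\le (n/t)^2(\log n)^2$, which yields the conjecture only in the range $t\ge n/(\log n)^{O(1)}$. Your proposal, as you candidly state in its final sentence, stops short of a proof: after the first-moment setup you stratify the union bound by the restricted doubling $|S\,\widehat{+}\,S|=Km$ and correctly observe that even with the strongest known Freiman--Sanders structure theorems the count $N(m,Km)$ overwhelms the probability $e^{-Kmt/n}$ near the saddle $K\asymp n/t$. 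That is a genuine, unclosed gap --- not a technical detail but the entire content of the problem --- and none of your three suggested repairs (Janson/second moment, Fourier/expander mixing, containers for the Schur-triple hypergraph) is known to recover the conjectured exponent $1$ on $n/t$ for all $t$; the spectral route, for instance, saturates at $n/\sqrt{t}$.

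For comparison, the paper's partial result takes a different and cheaper route than your doubling stratification: instead of union-bounding over all candidate independent $m$-sets $S$, it extracts from each $S$ a small certificate $R\subseteq S$ with $|R|=O(\sqrt{m})$ and $|\widehat{2}R|\ge cm$ (Proposition~\ref{prop-random} with $d=2$), and union-bounds only over certificates. The event that $R$ is independent forces $T\cap\widehat{2}R=\emptyset$, of probability at most $e^{-cmt/n}$, and the certificate count $n^{O(\sqrt{m})}$ then forces $m\gtrsim (n/t)^2(\log n)^2$ --- the loss of a square in $n/t$ being exactly the price of the $\sqrt{m}$-sized certificate. If you want a correct partial statement out of your framework, this is the argument to reproduce; your analysis of the two extreme regimes of $k$ is fine, but it should not be presented as progress toward a full proof, since the intermediate regime is precisely where the conjecture lives and remains open.
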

The conjecture was known to be true for $t=\Om(n)$. Alon~\cite{A} gave a bound $\al(G)\le \frac{n}{t^{1/2}}\log n$. Using the idea in the proof of Theorem~\ref{thm-dfold}, we can establish the following bound:  
$$\al(G)\le \left(\frac{n}{t}\right)^2(\log n)^2,$$
which implies Conjecture~\ref{conj-alon} for $t\ge n/(\log n)^{O(1)}$. 



\begin{thebibliography}{99}
\bibitem{A}
N.~Alon,
\newblock Large sets in finite fields are sumsets,
\newblock {\em  J. Number Theory}, (126) 2007, 110--118.	
	
\bibitem{MTF}
J.~Balogh, H.~Liu, S.~Pet\v{r}\'{\i}\v{c}kov\'{a} and M.~Sharifzadeh,
\newblock The typical structure of maximal triangle-free graphs,
\newblock {\em Forum of Mathematics, Sigma}, (3) 2015.

\bibitem{BLST}
 J.~Balogh, H.~Liu, M.~Sharifzadeh and A.~Treglown,
 \newblock The number of maximal sum-free subsets of integers,
\newblock  {\em  Proc. Amer. Math. Soc.}, (143) 2015, 4713--4721. 

\bibitem{BLST2}
 J.~Balogh, H.~Liu, M.~Sharifzadeh and A.~Treglown,
 \newblock Sharp bound on the number of maximal sum-free subsets of integers,
\newblock  submitted.

\bibitem{BMS}
 J.~Balogh, R.~Morris and W.~Samotij,
 \newblock Independent sets in hypergraphs,
\newblock  {\em  J. Amer. Math. Soc.}, (28) 2015, 669--709.

\bibitem{Beh}
F.A.~Behrend,
 \newblock On sets of integers which contain no three terms in arithmetical progression,
\newblock  {\em Proc. Nat. Acad. Sci. U.S.A.}, (32) 1946, 331--332.


\bibitem{Ber}
E.R.~Berlekamp,
\newblock A construction for partitions which avoid long arithmetic progressions,
\newblock  {\em Canad. Math. Bull.}, (11) 1968, 409--414.


\bibitem{Bloom}
T.~Bloom,
\newblock A quantitative improvement for Roth's theorem on arithmetic progressions,
\newblock {\em Arxiv:1405.5800}, submitted.

\bibitem{Bo}
B.~Bollob\'as,
\newblock Random Graphs,
\newblock {\em  London: Academic Press}, 1985.

\bibitem{Bou}
J.~Bourgain,
 \newblock On triples in arithmetic progression,
\newblock  {\em Geom. Funct. Anal.}, (9) 1999, 968--984.

\bibitem{CE}
P.~Cameron and P.~Erd{\H{o}}s,
\newblock On the number of sets of integers with various properties,
\newblock in Number Theory (R.A. Mollin, ed.), 61--79, Walter de Gruyter, Berlin, 1990.

\bibitem{CE-max}
P.~Cameron and P.~Erd{\H{o}}s,
\newblock Notes on sum-free and related sets,
\newblock {\em Combin. Probab. Comput.}, 8, (1999), 95--107.

\bibitem{CG}
D.~Conlon and W.T.~Gowers,
 \newblock Combinatorial theorems in sparse random sets,
\newblock  {\em Ann. of Math.}, to appear.

\bibitem{CS}
E.~Croot and O.~Sisask,
 \newblock A new proof of {R}oth's Theorem on Arithmetic progressions,
\newblock  {\em  Proc. Amer. Math. Soc.}, (137) 2009, 805--809.

\bibitem{Elk}
M.~Elkin,
 \newblock An improved construction of progression-free sets,
\newblock  {\em Israel J. Math.}, (184) 2011, 93--128.

\bibitem{Erd}
P.~Erd\H{o}s,
\newblock Problems and results in combinatorial number theory,
\newblock  {\em  Journees {A}rithm\'etiques de {B}ordeaux ({C}onf., {U}niv. {B}ordeaux, {B}ordeaux, 1974)}, 295--310. Ast\'erisque, Nos. 24-25.

\bibitem{EH}
P.~Erd\H{o}s and A.~Hajnal,
\newblock Research problem 2-5,
\newblock  {\em  J. Combinatorial Theory}, (2) 1967, 104--105.

\bibitem{ekr76}
P.~Erd\H os, D.~J.~Kleitman\ and\ B.~L.~Rothschild, 
\newblock{Asymptotic enumeration of $K\sb{n}$-free graphs}, 
\newblock{\it Colloquio Internazionale sulle Teorie Combinatorie (Rome, 1973), Tomo II} 19--27. Atti dei Convegni Lincei, 17, Accad. Naz. Lincei, Rome.

\bibitem{ErdSim}
P.~Erd\H os, M.~Simonovits, Cube-supersaturated graphs and related problems, 
Progress in graph theory (Waterloo, Ont., 1982), pp. 203--218, Academic Press, Toronto, ON, 1984.

\bibitem{Er-St}
P.~Erd\H{o}s and A.H.~Stone,
 \newblock On the structure of linear graphs,
\newblock  {\em  Bull. Amer. Math. Soc.}, (52) 1946, 1087--1091.

\bibitem{Fo}
J.~Folkman,
\newblock Graphs with monochromatic complete subgraphs in every edge coloring,
\newblock  {\em SIAM J. Appl. Math.}, (18) 1970, 19--24.

\bibitem{Gow1}
W.T.~Gowers,
 \newblock A new proof of {S}zemer\'edi's theorem for progressions of length four,
\newblock  {\em Geom. Func. Anal.}, (8) 1998, 529--551.

\bibitem{Gow2}
W.T.~Gowers,
 \newblock A new proof of {S}zemer\'edi's theorem,
\newblock  {\em Geom. Func. Anal.}, (11) 2001, 465--588.

\bibitem{G-CE}
B.~Green,
\newblock The {C}ameron-{E}rd{\H o}s conjecture,
\newblock {\em Bull. London Math. Soc.}, 36, (2004), 769--778.   

\bibitem{G}
B.~Green,
\newblock Essay submitted for the Smith's Prize.
\newblock Cambridge University, 2001.

\bibitem{G-Removal}
B.~Green,
\newblock A {S}zemer\'edi-type regularity lemma in abelian groups, with
              applications,
\newblock {\em Geom. Funct. Anal.}, (15) 2005, 340--376. 

\bibitem{G-count-ss}
B.~Green,
\newblock Counting sets with small sumset, and the clique number of
random {C}ayley graphs.
\newblock {\em Combinatorica}, (25) 2005, 307--326.

\bibitem{GT}
B.~Green and T.~Tao,
 \newblock The primes contain arbitrarily long arithmetic progressions,
\newblock  {\em Ann. of Math. (2)}, (167) 2008, 481--547.

\bibitem{GW}
B.~Green and J.~Wolf,
 \newblock A note on {E}lkin's improvement of {B}ehrend's construction,
\newblock  {\em Additive number theory}, Springer, New York, 2010, 141--144.

\bibitem{high-girth}
H.~H\`{a}n, T.~Retter, V.~R\"{o}dl and M.~Schacht,
\newblock Ramsey-type numbers involving graphs and hypergraphs with large girth,
\newblock  {\em arXiv}: 1604.05066.

\bibitem{HB}
D.R.~Heath-Brown,
 \newblock Integer sets containing no arithmetic progressions,
\newblock  {\em J. London Math. Soc. (2)}, (35) 1987, 385--394.

\bibitem{KW}
D.~Kleitman and D.~Wilson,
\newblock On the number of graphs which lack small cycles,
\newblock  manuscript, 1996.


\bibitem{Sidon}
Y.~Kohayakawa, S.~Lee, V.~R{\"o}dl and W.~Samotij,
\newblock The number of {S}idon sets and the maximum size of {S}idon
              sets contained in a sparse random set of integers,
\newblock  {\em Random Structures and Algorithms}, (46) 2015, 1--25.

\bibitem{MS}
R.~Morris and D.~Saxton,
\newblock The number of $C_{2k}$-free graphs,
\newblock  {\em Advances in Math.}, to appear.

\bibitem{NS}
R.~Nenadov and A.~Steger,
 \newblock A short proof of the Random Ramsey theorem,
\newblock  {\em Combinatorics, Probability, and Computing}, Volume 25, January 2016, 130--144.
  
\bibitem{NR}
J.~Ne{\v{s}}et{\v{r}}il and V.~R{\"o}dl,
\newblock The {R}amsey property for graphs with forbidden complete subgraphs,
\newblock  {\em J. Combinatorial Theory Ser. B}, (20) 1976, 243--249.

\bibitem{NR-vdw}
J.~Ne{\v{s}}et{\v{r}}il and V.~R{\"o}dl,
 \newblock Van der {W}aerden theorem for sequences of integers not containing an arithmetic progression of {$k$} terms,
\newblock  {\em Comment. Math. Univ. Carolinae}, (17) 1976, 675--681.
     
     \bibitem{RRS}
     V.~R\"odl, A.~Rucinski, M.~Schacht, An exponential-type upper bound for Folkman numbers, {\em Combinatorica}, to appear.
     
     
\bibitem{Ro}
K.F.~Roth,
 \newblock On certain sets of integers,
\newblock  {\em J. London Math. Soc.}, (28) 1953, 104--109.

\bibitem{S}
T.~Sanders,
 \newblock On {R}oth's theorem on progressions,
\newblock  {\em Ann. of Math. (2)}, (174) 2011, 619--636.

\bibitem{sap} 
A.~A.~Sapozhenko,
\newblock The {C}ameron-{E}rd{\H o}s conjecture, (Russian)
\newblock {\em Dokl. Akad. Nauk.}, (393) 2003, 749--752.

\bibitem{ST}
D.~Saxton and A.~Thomason,
\newblock Hypergraph containers,
\newblock  {\em Invent. Math.}, (201) 2015, 925--992.

\bibitem{Sch}
M.~Schacht,
 \newblock Extremal results for random discrete structures,
\newblock  {\em Ann. of Math. (2)}, (184) 2016, 331--363.


\bibitem{PNT}
A.~Selberg,
 \newblock An elementary proof of the prime-number theorem,
\newblock  {\em Ann. of Math. (2)}, (50) 1949, 305--313.

\bibitem{Sp}
J.~Spencer,
 \newblock Restricted {R}amsey configurations,
\newblock  {\em J. Combinatorial Theory Ser. A}, (19) 1975, 278--286.

\bibitem{Ran}
R.A.~Rankin,
 \newblock Sets of integers containing not more than a given number of
              terms in arithmetical progression,
\newblock  {\em Proc. Roy. Soc. Edinburgh Sect. A}, (65) 1960/1961, 332--344.

\bibitem{Sz}
E.~Szemer{\'e}di,
 \newblock Integer sets containing no arithmetic progressions,
\newblock  {\em Acta Math. Hungar.}, (56) 1990, 155--158.

\bibitem{Sz2}
E.~Szemer{\'e}di,
 \newblock On sets of integers containing no $k$ elements in arithmetic progression, 
\newblock  {\em Acta Arith.}, (27) 1975, 199--245.

\bibitem{V}
P.~Varnavides,
\newblock On certain sets of positive density,
\newblock  {\em J. London Math. Soc.}, (34) 1959, 358--360.

\bibitem{vdW}
B.L.~van der Waerden,
\newblock Beweis einer Baudetschen Vermutung,
\newblock  {\em Nieuw Archief voor Wiskunde}, (15) 1927, 212--216.

\end{thebibliography}
\end{document}